\newif\ifarxiv
\newcolumntype{+}{!{\vrule width 2pt}}
\newlength\savedwidth
\renewcommand{\@biblabel}[1]{\quad#1.}
\renewcommand{\vec}[1]{ {\mathbf{#1}}}
\newcommand{\T}{^\top}
\newcommand{\bgamma}{\boldsymbol{\gamma}}
\newcommand{\bl}{\mbox{\boldmath $l$}}
\newcommand{\bd}{\mbox{\boldmath $d$}}
\newcommand{\bu}{\mbox{\boldmath $u$}}
\newcommand{\BM}{\begin{bmatrix}}
\newcommand{\EM}{\end{bmatrix}}
\newcommand{\dd}[2]{\frac{{\rm d}#1}{{\rm d}#2}}
\newcommand{\ddt}{\dd{}{t}}
\newcommand{\vH}{\vec{H}}
\newcommand{\beq}{\begin{equation}}
\newcommand{\eeq}{\end{equation} }
\newcommand{\A}{\mathbf{A}}
\newcommand{\x}{\mathbf{x}}
\newcommand{\f}{\mathbf{f}}
\newcommand{\M}{\mathbf{M}}
\newcommand{\h}{\mathbf{h}}
\newcommand{\delt}[2][]{\ifthenelse{\equal{#1}{}} { \delta \mathbf{#2}}{\delta \mathbf{#2}_{#1}}}
\newcommand{\llambda}{{\boldsymbol{\lambda}}}
\newcommand{\Rn}{\mathbb{R}^n}
\newcommand{\R}{\mathbb{R}}
\renewcommand{\H}{\mathbf{H}}
\newcommand{\z}{\mathbf{z}}
\newcommand{\y}{\mathbf{y}}
\newcommand{\g}{\mathbf{g}}
\newcommand{\X}{\mathbf{X}}
\newtheorem{theorem}{Theorem}
\newtheorem{definition}{Definition}
\newtheorem{example}{Example}
\newtheorem{proposition}{Proposition}
\newtheorem{remark}{Remark}
\newtheorem{corollary}{Corollary}
\newcommand{\script}[1]{\mathscr{#1}}
\newcommand{\logdet}[1]{{\rm logdet}(#1)}
\newcommand{\tr}[1]{{\rm tr}(#1)}
\newcommand{\Z}{\mathbf{Z}}
\newcommand{\grad}[2]{ \nabla_{#2} #1 } 
\newcommand{\hess}[3]{\nabla_{#2#3} #1 }
\newcommand{\pd}[2]{ \partial_{#2} #1 } 
\newcommand{\pdd}[3]{\partial_{#2#3} #1 }
\newcommand{\Jac}[2]{ \frac{\partial #1}{\partial #2} }
\newcommand{\jac}[2]{\Jac{#1}{#2}}
\newcommand{\xsubi}{i}
\newcommand{\xsubj}{j}
\newcommand{\xsubk}{k}
\newcommand{\xsub}[1]{#1}
\newcommand{\nextTime}[1]{}
\renewcommand{\a}{\mathbf{a}}
\newcommand{\ah}{\hat{\a}}
\newcommand{\Y}{\mathbf{Y}}
\newcommand{\J}{\mathbf{J}}
\newcommand{\Jh}{\hat{\J}}
\renewcommand{\u}{\mathbf{u}}
\newcommand{\W}{\mathbf{W}}
\renewcommand{\A}{\mathbf{A}}
\newcommand{\Acal}{\mathcal{A}}
\newcommand{\at}{\tilde{\a}}
\newcommand{\s}{\mathbf{s}}
\newcommand{\xt}{\tilde{\x}}
\newcommand{\Jdh}{\hat{\dot{\J}}}
\newcommand{\K}{\mathbf{K}}
\renewcommand{\P}{\mathbf{P}}
\newcommand{\Q}{\mathbf{Q}}
\begin{document}
\nolinenumbers
\vspace*{0.2in}

\ifarxiv
	\graphicspath{{./}}
\else
	\graphicspath{{Figures/}}
\fi

\begin{flushleft}
{\Large
\textbf\newline{Beyond Convexity - Contraction and Global Convergence of Gradient Descent} 
}
\newline
\\
Patrick M. Wensing\textsuperscript{1*},
Jean-Jacques Slotine\textsuperscript{2}
\\
\bigskip
\textbf{1} Department of Aerospace and Mechanical Engineering, University of Notre Dame, Notre Dame, IN, USA
\\
\textbf{2} Department of Mechanical Engineering, Department of Brain and Cognitive Sciences, and Nonlinear Systems Laboratory, Massachusetts Institute of Technology, Cambridge, MA, USA
\\
\bigskip

%
%





* pwensing@nd.edu

\end{flushleft}

\section*{Abstract}
This paper considers the analysis of continuous time gradient-based optimization algorithms through the lens of nonlinear contraction theory.   
It demonstrates that in the case of a time-invariant objective, most elementary results on gradient descent based on convexity can be replaced by much more general results based on contraction.
In particular, gradient descent converges to a unique equilibrium if its dynamics are contracting in any metric, with convexity of the cost corresponding to the special case of contraction in the identity metric. 
More broadly, contraction analysis provides new insights for the case of geodesically-convex optimization, wherein non-convex problems in Euclidean space can be transformed to convex ones posed over a Riemannian manifold. In this case, 
natural gradient descent converges to a unique equilibrium if it is contracting in any metric, with geodesic convexity of the cost corresponding to contraction in the natural metric. New results using semi-contraction provide additional insights into the topology of the set of optimizers in the case when multiple optima exist.
Furthermore, they show how semi-contraction may be combined with specific additional information to reach broad conclusions about a dynamical system. 
The contraction perspective also easily extends to time-varying optimization settings and allows one to recursively build large optimization structures out of simpler elements. Extensions to natural primal-dual optimization and game-theoretic contexts further illustrate the potential reach of these new perspectives.


\section{Introduction}

This paper considers the analysis of continuous-time gradient-based optimization through the lens of nonlinear contraction theory. It is  motivated, in part, by recent observations in machine learning that arise in the application of gradient descent (or its stochastic counterpart) for the training of over-parameterized networks \cite{bassily2018exponential}. Modern networks often possess many more parameters than training examples and can fit the labels perfectly, resulting in submanifold valleys of the parameter space with equal cost \cite{cooper2018loss,liu2020theory,brea2019weight}. Moreover, recent results  suggest that highly-redundant networks experience few to no local optima that are not global optima \cite{sagun2017empirical, allen2018convergence,du2018gradient}. These observations may be surprising in light of the fact that the loss landscapes for these problems are rarely convex.

Although convex problems admit provable globally optimal solutions, other broader classes of functions share this same property. For example, Invex functions \cite{hanson1981sufficiency} guarantee that any local optimum is a global optimum, although the utility of invexity conditions remains a point of contention \cite{zualinescu2014critical}. Functions satisfying the Polyak-Lojasiewicz (PL) inequality \cite{bassily2018exponential,polyak1963gradient,karimi2016linear,liu2020theory} give rise to exponentially convergent gradient descent to  a provably optimal solution. While the PL condition is, in general, difficult to verify without an a-priori known globally optimal solution, the existence of zero-loss solutions in over-parameterized learning \cite{liu2020theory,du2018gradient,allen2018convergence} makes it tractable in important special cases. Geodesic convexity \cite{RAPCSAK91,Sra_First_Order_Methods} generalizes convexity to a Riemannian setting, with applicability to optimization on manifolds \cite{Absil08}, as well as to conventional Euclidean settings where $\mathbb{R}^n$ is endowed with a manifold structure through the definition of a metric. Here, we consider another class of conditions for the convergence of gradient and natural gradient descent to a globally optimal point. We do so through adopting the perspective of nonlinear contraction theory and analyzing gradient descent in continuous time. 

Contraction theory~\cite{Slotine98} allows the stability of nonlinear non-autonomous systems to be characterized through linear time-varying dynamics describing the propagation of infinitesimally small displacements along the systems' flow. The existence of a Riemannian metric that contracts these virtual displacements (i.e., elements in the tangent space) is necessary and sufficient for exponential convergence of any pair of trajectories. Contraction naturally yields methods for constructing stable systems of systems, including synchronization phenomena \cite{tabareau} and consensus \cite{Wei_delay,cloud} as well as other key building blocks that allow the construction of large contracting systems out of simpler elements \cite{modular}. These properties provide opportunities to construct larger optimization structures from simpler elements (e.g., in distributed or competitive optimization settings).

    The contribution of this paper is to apply these contraction tools for the analysis of gradient and natural gradient optimization. We consider optimization problems posed over $\Rn$ wherein no explicit manifold structure necessarily exists a-priori. Instead, we consider the analysis of optimization following endowing these problems with additional structure (a Riemannian metric), analyzing their convergence, and considering the use of contraction tools to build larger optimization structures out of smaller ones. Analysis proceeds in continuous time. While this approach is limited, in part, by the fact that computational optimization algorithms require a discrete implementation, a  continuous perspective has yielded insight on important phenomena such as in the analysis~\cite{su2014differential}, discrete implementations \cite{Zhang18}, and extensions \cite{wibisono2016variational,krichene2015accelerated} of Nesterov's accelerated gradient descent method \cite{nesterov1998introductory}. It has also enabled analysis of primal-dual algorithms~\cite{Kostya}, where an absolute time reference is obtained by introducing additional fast dynamics or delays using a singular perturbation framework. Recent results \cite{frana2020dissipative} provide principled tools to derive discrete-time implementations that preserve specific continuous-time convergence rates. 

The paper is organized as follows.
Section~\ref{sec:Convex} provides our main results, detailing the applicability of contraction theory to analyze gradient descent in continuous time. We show that convex functions represent the special case of contraction in the identity metric. The flexibility afforded by state-dependent contraction metrics, however, enables significant extra freedom for guaranteeing that all local optima are globally optimal. We then consider the extensions of these results to natural gradient descent, where geodesic convexity of a function corresponds to contraction of its natural gradient system in the natural metric. In both cases, results highlight the topology of the set of optimizers in the case of semi-contraction, which would have most direct applicability to over-parameterized networks. New results also show how semi-contraction may be combined with specific additional information to reach broad conclusions about a dynamical system. Section~\ref{GPD_opt} details extensions of these results to the case of primal-dual type dynamics that appear in mixed convex/concave saddle systems, and shows how a broad class of natural adaptive control laws can be interpreted as a primal-dual system. Section \ref{sec:application} discusses the special case of g-convex functions and associated combination properties for interfacing with other models. Section \ref{sec:Conclusions} provides an outlook on potential future advances that may stem from these connections. 

\section{Contraction Analysis of Gradient Systems}
\label{sec:Convex}

We first recall basic definitions and facts on convex optimization and show how a contraction analysis of gradient-based optimization considerably generalizes the class of functions that admit a unique global optimum. Following this presentation, results are generalized to the case of geodesically-convex optimization, which is particularly suited to analysis via contraction tools. Throughout this analysis, given a differentiable function $\h :\R^n \rightarrow \R^m$, we denote the Jacobian of $\h(\x)$ by
\[
\Jac{\h}{\x} = \begin{bmatrix} \Jac{\h}{x_1} & \cdots & \Jac{\h}{x_n} \end{bmatrix}  \in \R^{m\times n}
\]
In the special case of a scalar-valued function $f:\R^n\rightarrow \R $ we denote the gradient of $f(\x)$ by
\[
\grad{f}{}(\x) = \left[ \Jac{f}{\x} \right]\T \in \R^n
\]
and its Hessian by $\nabla^2f(\x)$. Unless otherwise stated, we assume all functions are sufficiently smooth such that derivatives of the necessary order exist and are continuous. 

Before we embark on this discussion, let us note that of course,
as illustrated, e.g., in~\cite{su2014differential} and in the following example, continuous-time analysis tools in general may be used to conceptually illuminate the mechanisms involved in discrete-time algorithms. As this paper will show, contraction tools give particularly simple insights into important classes of optimization problems, such as, e.g., geodesically-convex optimization.

\begin{example}
The Polyak-Lojasiewicz (PL) inequality is one of the most general sufficient conditions for discrete-time gradient descent
to exhibit linear convergence rates without strong convexity of the
cost~\cite{polyak1963gradient,karimi2016linear}.
A function is said to satisfy the PL inequality if it has a (typically unknown) global minimum value $f^*$ 
and there exists a constant $\mu>0$ such that
\[
\forall \x, \ \ \ \ \ \ \ \| \nabla f(\x) \|^2 \ \ge \ \mu ( f(\x) - f^*) 
\]

Consider gradient descent on the cost function $f(\x)$ from a continuous-time point of view, 
\[
\dot{\x} = - \nabla{f}({\x})
\]
Using $ \ V= f(\x) - f^* \ $ as a Lyapunov-like function, and then requiring that $V$ converges exponentially with rate $\mu$,
yields
\[
\dot{V} = - \| \nabla f(\x) \|^2 \ \le \ - \mu V
\]
The inequality above is exactly the PL condition. Thus, we see that the PL condition is nothing but the condition
for exponential convergence of the residual cost $ \ V= f(\x) - f^* \ $.

Similarly, imposing $\ \dot{V} \le - \mu \sqrt{V}\ $, corresponding to finite-time convergence (in time less than $\ 2 \sqrt{V(0)}/\mu \ $ \cite{slotine1991applied}), would require a modified PL-like condition
\[
\forall \x, \ \ \ \ \ \ \ \| \nabla f(\x) \|^2 \ \ge \ \mu \ \sqrt{ f(\x) - f^*}
\]
while imposing $\ \dot{V} \le - \mu V^2\ $ would require
\[
\forall \x, \ \ \ \ \ \ \ \| \nabla f(\x) \|\ \ge \ \sqrt{\mu} \ (f(\x) - f^*)
\]
\end{example}

By comparison, the results pursued via contraction analysis in this paper will ensure exponential convergence of any pair of trajectories for gradient descent, but likewise will ensure convergence of those solutions to a global optimum.

\subsection{Relationships Between Convexity and Contraction}
\label{sec:contractAndGradient}

\begin{definition}[Strong Convexity] 
\label{def:strongConv}
A twice differentiable function $f : \mathbb{R}^n \rightarrow \mathbb{R}$ is $\alpha$-strongly convex with $\alpha>0$ if its Hessian matrix $\nabla^2{f}{}{}(\x)$ satisfies the matrix inequality
\[
\nabla^2{f}(\x) \succeq \alpha\, \mathbf{I} \quad \quad \forall \x \in \mathbb{R}^n
\]
\end{definition}
As its name suggests, a function that is strongly convex is convex in the usual sense, while the converse is not always true. From a dynamic systems perspective, strong convexity provides exponential convergence of gradient flows:
\begin{proposition}[Exponential Convergence of Gradient Systems for Strongly Convex Functions]
\label{prop:euc_grad_desc}
\ If a twice differentiable function $f: \mathbb{R}^n \rightarrow \mathbb{R}$ is $\alpha$-strongly convex, then its gradient system
\begin{equation}
\dot{\x} = - \grad{f}{}(\x)
\label{eq:grad_descent}
\end{equation}
converges to the unique global minimum of $f$ exponentially with rate $\alpha$.
\end{proposition}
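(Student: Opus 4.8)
The plan is to prove this by a contraction argument with respect to the identity metric, which anticipates the paper's general viewpoint and produces the exponential rate directly.

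First I would pin down the target of the flow. Integrating the Hessian bound $\pdd{f}{\x}{\x}\succeq\alpha\,\mathbf{I}$ shows that $f$ grows at least quadratically, hence is coercive and attains a minimum at some $\x^{*}$; moreover $\pdd{f}{\x}{\x}\succeq\alpha\,\mathbf{I}\succ\mathbf{0}$ makes $\x\mapsto\pd{f}{\x}$ strictly monotone, so $\pd{f}{\x}=\mathbf{0}$ has $\x^{*}$ as its unique root. Hence $\x^{*}$ is the one and only equilibrium of \eqref{eq:grad_descent}.

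Next I would linearize \eqref{eq:grad_descent} along an arbitrary solution $\x(t)$. A virtual displacement obeys $\dot{\delta\x} = -\,\pdd{f}{\x}{\x}(\x(t))\,\delta\x$, so $\tfrac{d}{dt}\big(\tfrac12\|\delta\x\|^{2}\big) = -\,\delta\x^{\top}\pdd{f}{\x}{\x}(\x(t))\,\delta\x \le -\alpha\|\delta\x\|^{2}$ by Definition~\ref{def:strongConv}; thus $\|\delta\x(t)\|\le e^{-\alpha t}\|\delta\x(0)\|$. Integrating this infinitesimal contraction rate along the straight segment joining $\x(t)$ to the constant trajectory $\x^{*}$ gives $\|\x(t)-\x^{*}\|\le e^{-\alpha t}\|\x(0)-\x^{*}\|$, i.e. exponential convergence to the global minimum at rate $\alpha$. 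An equivalent finite argument uses the Lyapunov function $V(\x)=\tfrac12\|\x-\x^{*}\|^{2}$: since $\pd{f}{\x}$ vanishes at $\x^{*}$, strong monotonicity yields $(\x-\x^{*})^{\top}\pd{f}{\x}\ge\alpha\|\x-\x^{*}\|^{2}$, hence $\dot V = -(\x-\x^{*})^{\top}\pd{f}{\x}\le -2\alpha V$.

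I do not expect any real obstacle. The only slightly delicate point is the preliminary existence and uniqueness of $\x^{*}$, which is exactly where coercivity enters. I would also flag that ``rate $\alpha$'' refers here to the state; the objective gap $f(\x(t))-f(\x^{*})$ in fact decays like $e^{-2\alpha t}$, since $\tfrac{d}{dt}\big(f(\x)-f(\x^{*})\big) = -\|\pd{f}{\x}\|^{2}$ and strong convexity implies the Polyak--\L{}ojasiewicz inequality $\|\pd{f}{\x}\|^{2}\ge 2\alpha\big(f(\x)-f(\x^{*})\big)$. The whole content is the Hessian lower bound feeding a quadratic Lyapunov / contraction estimate.
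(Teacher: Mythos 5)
Your proposal is correct and follows essentially the same route as the paper, which proves Proposition~\ref{prop:euc_grad_desc} by observing that the gradient flow is contracting with rate $\alpha$ in the identity metric (virtual displacements obey $\dot{\delta\x}=-\pdd{f}{\x}{\x}\,\delta\x$, hence $\|\delta\x(t)\|\le e^{-\alpha t}\|\delta\x(0)\|$) and then invoking the distance-shrinking property~\eqref{d_M} with the constant trajectory $\x^{*}$ as the second solution. Your added details $-$ coercivity for existence and uniqueness of $\x^{*}$, the equivalent Lyapunov estimate via strong monotonicity, and the $e^{-2\alpha t}$ decay of the objective gap $-$ are all accurate but not part of the paper's argument.
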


Toward proving this proposition, we will consider stability analysis through the application of nonlinear contraction theory.

\begin{definition}[Contraction Metric~\cite{Slotine98}]
\label{def:contraction}
A system $\dot{\x} = \h(\x,t)$ is said to be contracting at rate $\alpha>0$ with respect to
a symmetric positive definite metric $\M: \Rn  \rightarrow \R^{n\times n}$,  if for all  $t \in \R$ and all $\x \in \Rn $,
\begin{equation}
\dot{\M} + \A\T\, \M + \M\, \A \preceq -2 \alpha \M
\label{eq:contraction_condition}
\end{equation}
where $\A(\x,t) = \Jac{\h}{\x}$ is the system Jacobian and $\dot{\M}= \sum_i \left( \partial \M / \partial x_i \right) h_i(\x,t)$.
The system is said to be semi-contracting with respect to $\M$ when \eqref{eq:contraction_condition} holds with $\alpha=0$. 
\end{definition}

Given an $\alpha$-contracting system and an arbitrary pair of initial conditions $\x_1(0)$ and $\x_2(0)$, the solutions $\x_1(t)$ and $\x_2(t)$ converge to one another exponentially
\begin{equation}\label{d_M}
d_{\mathcal{M}}( \x_1(t), \x_2(t)) \ \le \ {\rm e}^{-\alpha t} \ d_{\mathcal{M}}(\x_1(0), \x_2(0)) \ \ 
\end{equation}
where $d_{\mathcal{M}}(\cdot,\cdot)$ denotes the geodesic distance on the Riemannian manifold $\mathcal{M} = (\Rn, \M)$. This property can be shown by considering the evolution of differential displacements $\delta \x$, which describe the evolution of nearby trajectories and coincide with the notion of virtual displacements in Lagrangian mechanics. More precisely, letting $\x(t;\x_0, t_0)$ denote the solution of $\dot{\x} = \h(\x,t)$ from initial condition $\x(t_0) = \x_0$, differential displacements evolve according to
\[
\delta \x(t) = \frac{ \partial \x(t; \x_0, t_0) }{\partial \x_0} \ \delta \x(t_0)
\]
Property~\eqref{d_M} follows from the evolution of the squared length of these differential displacements~\cite{Slotine98}, which verifies
\begin{equation}\label{ddt_z^2}
    \ddt (\delta \x\T \M \delta \x) \le - 2 \alpha (\delta \x\T \M \delta \x) 
\end{equation}
Furthermore, if a system is $\alpha$-contracting in a metric $\M$ that satisfies $\M(\x)\succeq \beta \mathbf{I}$  uniformly
for some constant $\beta>0$, then any two solutions verify
\[
\| \x_1(t) - \x_2(t) \| \ \le \ \frac{1}{\sqrt{\beta}} \  {\rm e}^{-\alpha t} d_{\mathcal{M}}(\x_1(0), \x_2(0))
\]

\begin{example}
Consider an $\alpha$-strongly convex function $f$ and its associated gradient descent system~\eqref{eq:grad_descent}. Since $f$ is strongly convex, it has a unique global minimum $\x^*$, which is a equilibrium point of \eqref{eq:grad_descent}. It can be verified that the gradient descent dynamics of $f$ are contracting in the identity metric $\M = {\bf I}$ with rate $\alpha$. Since geodesic distances are just Euclidean distances in this metric,~\eqref{d_M} immediately implies that
\[
\forall t \ge 0, \ \ \|\x(t) - \x^*\| \ \le \ {\rm e}^{-\alpha t} \ \| \x(0) - \x^* \| 
\]
thus proving Proposition~\ref{prop:euc_grad_desc}.
\end{example}

From this example, it is clear that strongly convex functions are a special case of ones whose gradient systems are contracting. The following proposition shows that one does not lose the convergence properties to a global optimum on this more general class of functions.

\begin{proposition}[Exponential Convergence of Contracting Gradient Systems]
\label{prop:contracting_grad_desc} Consider again gradient descent as in equation \eqref{eq:grad_descent}. The system converges exponentially to a unique global minimum if it is contracting in \emph{some} metric.
\end{proposition}

\begin{proof} Because \eqref{eq:grad_descent} is autonomous and contracting, it converges exponentially to a unique equilibrium $\x^\star$ \cite{Slotine98}. Furthermore, this equilibrium must be a global minimum since $f$ can only decrease along trajectories, with $\ \dot{f} = -  \grad{f}{}(\x)\T \ \grad{f}{}(\x) \ < \ 0\ $ for 
$\ \x \neq \x^\star .$
\end{proof}

The above result, which emphasizes contraction rather than convexity
as a sufficient condition to converge to a global minimum, can be extended to the semi-contracting case as follows.

\begin{proposition}[Asymptotic Convergence of Semi-Contracting Gradient Systems]
\label{prop:SemiGrad}
Consider a twice differentiable function $f : \mathbb{R}^n \rightarrow \mathbb{R}$, a symmetric positive definite metric $\M: \mathbb{R}^n \rightarrow \mathbb{R}^{n \times n}$, and the associated gradient system
\begin{equation}
\dot{\x} = -\grad{f}{}(\x) \label{eq:grad_auton}    
\end{equation}
Assume that dynamics \eqref{eq:grad_auton} is semi-contracting in \emph{some}
metric, and furthermore that one trajectory of the system is known to be bounded. Then, (a) $f$ has at least one stationary point, (b) any local minimum of $f$ is a global minimum, (c) all global minima of $f$ are path-connected, and (d) all trajectories asymptotically converge
to a global minimum of $f$.
\end{proposition}

\begin{proof}
(a) By assumption, there exists some initial condition $\x_0$ such that $\x(t; \x_0)$ remains bounded. This, in turn, implies that the $\omega$-limit set $\omega[\x(t;\x_0)]$ is non-empty, compact, forward invariant, and that
\[
d(\x(t;\x_0), \omega[\x(t;\x_0)]) \rightarrow 0 ~{\rm \ as~ \ }t\rightarrow + \infty
\]
Let $\x^*$ denote an element of $\omega[\x(t;\x_0)]$. Since \eqref{eq:grad_auton} is a gradient system, Theorem 15.0.3 of \cite{Wiggins2003} guarantees that $\x^*$ must be an equilibrium point of \eqref{eq:grad_auton}. This proves that $f$ has at least one stationary point.

Let us now show that $\omega[\x(t;\x_0)]$ consists only of the single point $\x^*$, by contradiction. Let $\x_1^*$ and $\x_2^*$ be distinct elements in $\omega[\x(t;\x_0)]$. Further let $\epsilon = d_\M(\x_1^*, \x_2^*)$ the geodesic distance between $\x_1^*$ and $\x_2^*$. Then, the geodesic balls $\mathcal{B}_1 : = \mathcal{B}_{\M}(\x_1^*,\frac{\epsilon}{3})$ and $\mathcal{B}_{\M}(\x_2^*,\frac{\epsilon}{3})$ are disjoint. Further, since the system is semi-contracting these geodesic balls are forward invariant. Yet, since $\x_1^*$ a limit point, $\x(t;\x_0)$ arrives within $\mathcal{B}_1$ at some point, and never leaves. Likewise, since $\x_2^*$ is a limit point, $\x(t;\x_0)$ arrives within $\mathcal{B}_2$ at some point, and never leaves. Thus, we have a contradiction, and the limit set must consist of a single point.

(b) and (c): Consider now two equilibrium points of \eqref{eq:grad_auton}, $\x_1^*$ and $\x_2^*$ , and a smooth
path $\bgamma(s)$ such that $\bgamma(0) = \x^*_1$ and $\bgamma(1) = \x^*_2$. Since the gradient dynamics are semi-contracting, for each $s$ the solution $\x(t;\bgamma(s))$ remains bounded. Thus, by the same reasoning as above, each $\x(t;\bgamma(s))$ converges to some equilibrium $\x^*(s)$ as $t\rightarrow + \infty$. Since $\nabla f(\x^*(s)) =0$ for each $s$, and $\x^*(s)$ smoothly connects $\x^*_1$ and $\x_2^*$, it follows that $f(\x_1^*) = f(\x_2^*)$. That is, all solutions converge to the same value for $f$.

(d): That all solutions of \eqref{eq:grad_descent} asymptotically converge to a global minimum of $f$ follows from that fact that $f$ decreases along all solutions, and all solutions converge to the same value for $f$.
\end{proof}

\begin{remark} In the case that a contraction metric needs to be found numerically, note that the conditions \eqref{eq:contraction_condition} for certifying contraction or semi-contraction are convex criteria. Thus, in many instances, the process of finding a metric numerically to verify contraction may be accomplished via convex optimization approaches, such as those based on sums-of-squares programming~\cite{aylward2008stability}.  
\end{remark}

\subsection{Relationship Between Geodesic Convexity and Contraction}

Geodesic convexity \cite{RAPCSAK91} generalizes conventional notions of convexity to the case where the domain of a function is equipped with a Riemannian metric. A special case occurs in geometric programming (GP)~\cite{Boyd_GeomProgram}. In GP, a non-convex problem over positive variables $\{x_i\}_{i=1}^N$ can be transformed into a convex problem by a change of variables $y_i = {\rm log}(x_i)$. Alternately GP can be formulated over the positive reals viewed as a Riemannian manifold by measuring differential length elements $ds$ in a relative sense
\begin{equation}
ds^2 = \sum_{i=1}^N \left( \frac{dx_i}{x_i} \right)^2 = \sum_{i=1}^N dy_i^2
\label{eq:GP_Gconvex}
\end{equation}
Geodesically-convex optimization generalizes this transformation strategy to a broader class of problems~\cite{Sra_First_Order_Methods}. However, beyond special cases (see, e.g., \cite{SRA_Conic_Optimization}), generative procedures remain lacking to formulate g-convex optimization problems or recognize g-convexity. 

To introduce g-convexity more formally, consider a function $f:\mathbb{R}^n \rightarrow \mathbb{R}$ and a positive definite metric $\M:\Rn \rightarrow \R^{n\times n}$. We note that geodesic convexity of $f$ is not an intrinsic property of the function itself, but rather is a property of $f$ defined on the Riemannian manifold $(\Rn, \M)$.  
\begin{definition}[g-Strong Convexity \cite{udriste}]
A twice differentiable function $f :  \mathbb{R}^n \rightarrow \mathbb{R}$ is said to be geodesically $\alpha$-strongly convex (with $\alpha>0$) in a symmetric positive definite metric $\M$ if its Riemannian Hessian matrix $\H(\x)$ satisfies:
\begin{equation}
\H(\x) \succeq \alpha\, \M(\x)\quad \quad \forall \x \in \mathbb{R}^n
\label{eq:Hessian}
\end{equation}
The elements of the Riemannian Hessian are given as \cite{udriste}
\begin{equation}
H_{ij} = \pdd{f}{i}{j} - \Gamma_{ij}^k \, \pd{f}{k} 
\label{eq:riemhessian}
\end{equation}
where $\pdd{f}{i}{j} = \frac{\partial^2 f}{\partial x_i \partial x_j}$ provide the elements of the conventional (Euclidean) Hessian and $\Gamma_{ij}^k$ denotes the Christoffel symbols of the second kind
\[
\Gamma_{ij}^m = \frac{1}{2} \sum_{k=1}^n \left[ M^{mk} \left( \pd{M_{ik}}{j} + \pd{M_{jk}}{i} - \pd{M_{ij}}{k} \right)  \right]
\]
with $M^{ij}(\x) = (\M(\x)^{-1})_{ij}$. The function $f$ is g-convex when \eqref{eq:Hessian} holds with $\alpha=0$.
\end{definition}

The Riemannian Hessian generalizes the notion of the Hessian from a Euclidean context and captures the curvature of $f$ along geodesics. Likewise, the natural gradient generalizes the notion of a Euclidean gradient to the Riemannian context in the following sense.

\begin{definition}[Natural Gradient \cite{Amari98b}]
Consider $\Rn$ equipped with a Riemannian metric $\M$. The {\em natural} gradient of a differentiable function $f : \Rn \rightarrow \R$ is the direction of steepest ascent on the manifold and is given in coordinates by $ \ \M(\x)^{-1} \grad{f}{}(\x) \ $.

\end{definition}

\begin{remark}
When $\M(\x)$ is the Hessian of some twice differentiable strictly convex scalar function $\psi(\x)$, natural gradient descent coincides with the 
continuous-time limit of mirror descent~\cite[Sec.~2.3]{gunasekar2018characterizing} with potential  $\psi(\x)$.
\end{remark}

\begin{remark} From a differential geometric viewpoint, the first covariant derivative of $f$ is a covector field  given in coordinates by $\grad{f}{}(\x)$, while
the natural gradient is a vector field given in coordinates by $\M(\x)^{-1} \grad{f}{}(\x)$ ~\cite{Amari98b}. In a Euclidean context, where $\M(\x)$ is identity, this distinction between covariant (covector) and contravariant (vector) representations of the gradient is immaterial.

Similarly, the Riemannian Hessian $\vH$ represents in coordinates the second covariant derivative of $f$.

\end{remark}

When $\M$ is the identity metric, geodesic $\alpha$-strong convexity naturally coincides with the definition of $\alpha$-strong convexity in Definition~\ref{def:strongConv}. 
 The natural gradient can be used to directly mirror Proposition~\ref{prop:euc_grad_desc} within the Riemannian context.

\begin{theorem}[Equivalence between g-Strong Convexity and Contraction of Natural Gradient]
\label{th_main} Consider a twice differentiable function $f  : \mathbb{R}^n\times \mathbb{R} \rightarrow \mathbb{R}$, a symmetric positive definite metric $\M: \mathbb{R}^n \rightarrow \mathbb{R}^{n \times n}$, and the natural gradient system \cite{Amari98b}
\begin{equation}
\dot{\x} = \h(\x,t) =  -\M(\x)^{-1}\, \grad{f(\x,t)}{\x} \label{eq:NatGrad}
\end{equation}
Then, $f$ is $\alpha$-strongly g-convex in the metric $\M$ for each $t$  if and only if \eqref{eq:NatGrad} is contracting with rate $\alpha$ in the metric $\M$. More specifically, the Riemannian Hessian verifies
\begin{equation}
\vH = -\frac{1}{2}\left( \M \A  + \A\T \M + \dot{\M} \right)
\label{eq:H}
\end{equation}
where $\A = \Jac{\h}{\x}$.

\end{theorem}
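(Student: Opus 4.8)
The plan is to derive the whole statement from the single identity \eqref{eq:H}; once that is available, the ``if and only if'' is immediate. Indeed, by the definition of geodesic $\alpha$-strong convexity, $f(\cdot,t)$ is $\alpha$-strongly g-convex in $\M$ exactly when $\vH(\x)\succeq\alpha\,\M(\x)$ for all $\x$ (condition \eqref{eq:Hessian}). Substituting \eqref{eq:H} and multiplying through by $-2$, this becomes $\M\,\pd{\h}{\x} + \big(\pd{\h}{\x}\big)\T\M + \dot{\M}\preceq -2\alpha\,\M$, which is precisely the contraction condition \eqref{eq:contraction_condition} for the natural gradient system \eqref{eq:NatGrad}, whose Jacobian is $\A=\pd{\h}{\x}$. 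Since $f(\cdot,t)$ enters \eqref{eq:NatGrad} only as a parameter and $\M$ carries no explicit $t$-dependence, ``for each $t$'' on the convexity side matches ``for all $t$'' on the contraction side, and the exponential convergence to the minimizer follows from \eqref{d_M}. So it remains to prove \eqref{eq:H}.

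For \eqref{eq:H} I would compute in coordinates. Write $\vg := \pd{f}{\x}$ for the Euclidean gradient covector and $\vw := \M^{-1}\vg$ for the natural gradient (a vector), so that $\h = -\vw$. Differentiating with the product rule and $\pd{\M^{-1}}{j} = -\M^{-1}\big(\pd{\M}{j}\big)\M^{-1}$, the Jacobian $\A = \pd{\h}{\x}$ splits into a Euclidean-Hessian part $-\M^{-1}\pdd{f}{\x}{\x}$ and a ``metric part'' assembled from first derivatives of $\M$ contracted against $\vw$. Left-multiplying by $\M$, the Hessian part becomes $-\pdd{f}{\x}{\x}$ while the metric part telescopes (two sign flips cancelling) to the matrix with $(i,j)$ entry $\sum_k \big(\pd{M_{ik}}{j}\big)w_k$. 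Adding $\A\T\M = (\M\A)\T$ doubles the symmetric Hessian part and symmetrizes the metric part in $i,j$, and adding $\dot{\M} = \sum_m\big(\pd{\M}{m}\big)h_m = -\sum_k w_k\,\pd{\M}{k}$ supplies the remaining $-\pd{M_{ij}}{k}$ term, giving
\[
\big(\M\A + \A\T\M + \dot{\M}\big)_{ij} = -2\,\pdd{f}{i}{j} + \sum_k w_k\left(\pd{M_{ik}}{j} + \pd{M_{jk}}{i} - \pd{M_{ij}}{k}\right).
\]
Since $\M^{-1}$ is symmetric, $w_k = \sum_m M^{mk}\,\pd{f}{m}$, so the parenthesized sum is exactly $2\sum_m \Gamma_{ij}^m\,\pd{f}{m}$ by the definition of the Christoffel symbols; hence the right-hand side equals $-2\big(\pdd{f}{i}{j} - \Gamma_{ij}^k\,\pd{f}{k}\big) = -2H_{ij}$ by \eqref{eq:riemhessian}, which rearranges to \eqref{eq:H}.

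The only delicate part is the index bookkeeping in that middle step: checking that $\M$ left-multiplied against $\big(\pd{\M^{-1}}{j}\big)\vg$ collapses to an expression in $\pd{M_{ik}}{j}$ and $\vw$ alone, and that the single advected term $\dot{\M}$ evaluated along \eqref{eq:NatGrad} furnishes precisely the $-\pd{M_{ij}}{k}$ contribution needed to complete the symmetric combination appearing in $\Gamma_{ij}^m$. Everything else is a direct consequence of the definitions in Section~\ref{sec:Convex}.
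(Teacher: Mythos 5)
Your proposal is correct and follows essentially the same route as the paper's Appendix~1: derive the identity \eqref{eq:H} by an explicit coordinate computation of $\M\,\pd{\h}{\x} + \big(\pd{\h}{\x}\big)\T\M + \dot{\M}$ using $\pd{\M^{-1}}{j} = -\M^{-1}\big(\pd{\M}{j}\big)\M^{-1}$, cancel the $\M\M^{-1}$ factors, symmetrize, and recognize the Christoffel combination, after which the equivalence with the contraction condition \eqref{eq:contraction_condition} is immediate from the definitions. The only cosmetic difference is your intermediate notation $\vec{w}=\M^{-1}\pd{f}{\x}$, which the paper handles in fully index-explicit form.
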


Appendix 1 provides a self-contained proof using conventional tensor analysis methods~\cite{lovelock1989tensors}, whose relationship with contraction conditions have been noted previously \cite{lohmiller2013exact,Loh09}. The same relationships drive coordinate-free versions of the result in~\cite{bullo}.

\begin{remark} Theorem~\ref{th_main} can also be viewed as a special case of contraction analysis for complex Hamilton-Jacobi dynamics~\cite{Loh09}. A reorganization of \eqref{eq:NatGrad} as 
\[
\M(\x) \ \dot{\x} \  = - \grad{f(\x,t)}{\x}
\]
may be recognized as the generalized momentum being the negative covariant gradient within a Hamiltonian mechanics context. 
\end{remark}

\begin{remark}
While Theorem \ref{th_main} applies to $\alpha$-strong convexity, the link between the Riemannian Hessian and the contraction condition~\eqref{eq:contraction_condition} also provides immediate equivalence between g-convexity of a function and semi-contraction of its natural gradient dynamics.
\end{remark}

\begin{remark}
Equation \eqref{eq:H} provides an alternate
way to compute the geodesic Hessian $\H$, and, as expected, leaves it invariant when the metric $\M$ is scaled by a strictly positive constant. Because of
the structure of the natural gradient dynamics, scaling
$\M$ is akin to scaling time and implies inversely scaling the contraction rate $\alpha$, consistently with~\eqref{eq:Hessian}.\\
By contrast, note that given a \emph{fixed} dynamics $\h$, the contraction metric analyzing it can always
be arbitrarily scaled while leaving the contraction rate unchanged.
\end{remark}

Similar to in Section~\ref{sec:contractAndGradient} where convexity corresponded to contraction of gradient in the identity metric, we likewise see that Thm.~\ref{th_main} imposes g-convexity via a particular choice of contraction metric for the natural gradient dynamics. Mirroring Prop.~\ref{prop:contracting_grad_desc}, removing this restriction on the contraction metric leads to significant additional flexibility for guaranteeing convergence to a globally optimal point. 

\begin{proposition}[Exponential Convergence of Contracting Natural Gradient Systems]
\label{prop:contracting_Natgrad_desc} Consider again natural gradient descent as in equation \eqref{eq:NatGrad}. The system converges exponentially to a unique global minimum if it is contracting in \emph{some} metric.
\end{proposition}

\begin{proof}
The proof follows immediately from the same logic as the proof of Proposition~\ref{prop:contracting_grad_desc}.
\end{proof}

\begin{remark}
\label{rem:robust}
\renewcommand{\bd}{{\mathbf d}}
Note that contraction also provides robustness. Consider perturbed dynamics $\dot{\x} = \h(\x,t) + \bd(t)$ with $\sqrt{ \bd(t)\T \M(\x) \bd(t) } < R$ uniformly. If the dynamics are contracting with rate $\lambda$, then all trajectories contract to a geodesic ball of radius $R/\lambda$ \cite{Slotine98}. This observation implies favorable properties for algorithms where an exact gradient may be difficult or intractable to compute, with approximation methods used in their place.
\end{remark}

\begin{theorem}[Semi-Contraction for Natural Gradient]
\label{thm:natGradNonStrict}
Consider a twice differentiable function $f : \mathbb{R}^n \rightarrow \mathbb{R}$, a symmetric positive definite metric $\M: \mathbb{R}^n \rightarrow \mathbb{R}^{n \times n}$, and the associated natural gradient system
\begin{equation}
\dot{\x} = - \M(\x)^{-1} \grad{f}{}(\x) \label{eq:nat_grad_auton}    
\end{equation}
Assume that dynamics \eqref{eq:nat_grad_auton} is semi-contracting in \emph{some}
metric, and furthermore that one trajectory of the system is known to be bounded. Then, (a) $f$ has at least one stationary point, (b) any local minimum of $f$ is a global minimum, (c) all global minima of $f$ are path-connected, and (d) all trajectories asymptotically converge
to a global minimum of $f$.
\end{theorem}
\textbf{}
\begin{proof}
The proof follows the exact same line of logic as the proof to Prop.~\ref{prop:SemiGrad}. The result of Theorem 15.0.3 of \cite{Wiggins2003}, which guarantees that any $\omega$-limit point of gradient descent \eqref{eq:grad_auton} is an equilibrium point, generalizes immediately to the case of natural gradient descent \eqref{eq:nat_grad_auton}.
\end{proof}

\begin{remark}
\label{rem:bad_saddles}
The topology of global optimizers satisfying this semi-contraction condition is the same as those observed when training over-parameterized  networks~\cite{cooper2018loss,liu2020theory}. However, empirical loss functions in these networks often also experience multiple saddle points 
\cite{dauphin2014identifying,jin2017escape}. The attractor sets associated with strict saddles have measure zero \cite{lee2016gradient,lee2017first} under discrete gradient descent with sufficiently small stepsize (i.e., with adequately close approximation to the continuous time case), while the dimensionality of the attractor sets can be further reduced via smoothed versions of the gradient \cite{kreusser2019deterministic}. 

While the presence of  strict saddles precludes the ability of a gradient system to be globally semi-contracting, any of the results given here can be generalized to forward invariant contraction or semi-contraction regions~\cite{Slotine98}. In principle, saddles could then be treated by excluding their measure zero attractor sets from suitably chosen contraction or semi-contraction regions. 
\end{remark}

The topology of equilibria in semi-contracting gradient systems
immediately implies the following result.

\begin{corollary}\label{think_locally}
Consider an autonomous, semi-contracting natural gradient system. If the linearization at some equilibrium point is strictly stable, then all system trajectories tend to this global minimizer.

More generally, if some equilibrium is locally asymptotically stable, all trajectories tend to this global minimizer.

\end{corollary}

\begin{proof} We prove the second part, the first
then follows directly from Lyapunov's linearization method.
Existence of an equilibrium implies existence of a bounded trajectory.
Furthermore, by definition, there exists a ball around the equilibrium point $\x^\star$  such that all trajectories initiated in that ball tend to $\x^\star$. If there was another equilibrium, the path
connecting it to $\x^\star$ would intersect that ball, which
is a contradiction since the path is itself composed of equilibria via Thm.~\ref{thm:natGradNonStrict}.
\end{proof}

\begin{remark}
\label{rem:JacobianOfSemi}
Strict stability of a natural gradient system at an equilibrium point
can of course be established simply by ensuring that all eigenvalues of its Jacobian at this point are strictly in the left-half complex plane.
This condition is equivalent to requiring that the Hessian of the objective function is positive definite at $\x^\star$. 

Indeed, given the natural gradient dynamics \eqref{eq:nat_grad_auton} with $\h(\x) = -\M(\x)^{-1} \nabla f(\x)$, the Jacobian at any equilibrium $\x^\star$ is
\begin{align*}
\left.\frac{\partial \h }{\partial \x}\right|_{\x^\star} &= \left.- \frac{\partial \left[\M^{-1}\right]}{\partial \x}\right|_{\x^\star} \cdot \nabla f(\x^\star)  - \M(\x^\star)^{-1} \nabla^2 f(\x^\star) \\
&=-\M(\x^\star)^{-1} \nabla^2 f(\x^\star)
\end{align*}
Applying a similarity transformation with the symmetric square root of $\M(\x^\star)$ yields
\begin{align*}
&\M^{\frac{1}{2}}(\x^\star) \left.\frac{\partial \h}{\partial \x}\right|_{\x^\star} \M^{-\frac{1}{2}}(\x^\star) \\ &~~~=-\M(\x^\star)^{-\frac{1}{2}} \left[ \nabla^2 f(\x^\star) \right] \M(\x^\star)^{-\frac{1}{2}}
\end{align*}
All eigenvalues of the symmetric matrix above are real, and they are all strictly negative if and only if the Hessian $\nabla^2 f(\x^\star)$ is positive definite. 

Note that this condition is equivalent to the geodesic Hessian at $\x^\star$ being positive definite in any metric, as the Euclidean Hessian is numerically equal to the geodesic Hessian in any metric in this case, due to all terms multiplying the Christoffel symbols in \eqref{eq:riemhessian} being zero.
\end{remark}

\begin{corollary} Consider an autonomous semi-contracting natural gradient system, and assume that the system has more than one equilibrium. Then, at any equilibrium, both the  Jacobian matrix of the dynamics and the Hessian of the objective have at least one zero eigenvalue.
\end{corollary}

\begin{proof} 
Consider an equilibrium $\x^\star$, and an equilibrium path connecting
it to some other equilibrium. The unit tangent vector at $\x^\star$ along this path is an eigenvector of the Jacobian with eigenvalue zero.  Given the algebraic relation between the Jacobian and the objective Hessian pointed out in Remark~\ref{rem:JacobianOfSemi},
this shows in turn that the objective Hessian has a zero eigenvalue.
\end{proof}

\subsection{Examples}

Let us illustrate Theorem~\ref{th_main} using the classical nonconvex Rosenbrock function:
\begin{equation}
f(\x) = 100 (x_1^2- x_2)^2 + (x_1-1)^2 
\label{eq:rosenbrock}
\end{equation}
This function has a unique global optimum at $\x^*=[1,1]\T$, which is located along a long, shallow, parabolic-shaped valley. 

\begin{example}
Consider the Rosenbrock function \eqref{eq:rosenbrock} and the metric \cite{udriste}
\[
\M(\x) = \begin{bmatrix} 400 x_1^2 + 1& -200 x_1 \\
        -200 x_1 &      100
\end{bmatrix} 
\]
The metric $\M(\x)$ satisfies ${\rm tr}(\M(\x))= 400 x_1^2 + 101>0$ and ${\rm det}(\M(\x)) = 100>0$, and thus $\M(\x)\succ0$. Note that $\M(\x)$ is not the Hessian of $f(\x)$. The natural gradient dynamics follows
\[
\dot{\x} = \h(\x)= - \M(\x)^{-1} \grad{f}{}(\x) = -2 \begin{bmatrix} x_1 -1 \\ x_1^2 - 2 x_1 +  x_2 \end{bmatrix}
\]
It can be verified algebraically that 
\[
\M \left( \Jac{\h}{\x} \right) + \left(\Jac{\h}{\x}\right)\T \M + \dot{\M} = -4 \M
\]
which shows that natural gradient descent is contracting with rate $\alpha=2$. This implies that the natural gradient dynamics satisfy
\[
d_{\mathcal{M}}(\x(t), \x^*) \ \le \ {\rm e}^{-2t} \ d_{\mathcal{M}}(\x(0), \x^*)
\]
where $\x^* = [1,1]\T$. Equivalently, the Rosenbrock function is geodesically $\alpha$-strongly convex with $\alpha = 2$. 
\end{example}

The Rosenbrock metric $\M(\x)$ can be viewed as following from a differential change of variables
\[
\delt{z} = \boldsymbol{\Theta}(\x) \delt{x} = \begin{bmatrix} 20 x_1 & -10 \\ 1 & 0 \end{bmatrix} \delt{x}
\]
where $\M=\boldsymbol{\Theta}\T\boldsymbol{\Theta}$ yields $\ \delt{x}\T\M\delt{x} = \delt{z}\T \delt{z}$. This differential change of variables is integrable, so that g-convexity of the Rosenbrock can be shown using the explicit nonlinear coordinate change $z_1 = 10 x^2_1 - 10 x_2$ and $z_2 = x_1-1$ that provides $f = z_1^2 + z_2^2$.

\begin{example} \label{psi} Mirror descent provides another example of a metric corresponding to an explicit state transformation, with Newton's method as a special case. 

Consider a twice differentiable scalar objective function $f(\x)$, and a smooth strictly convex scalar function $\psi(\x)$. Denoting by $\H_f(\x) = \nabla^2 f(\x)$ and $\H_\psi = \nabla^2 \psi$ the Hessians of these functions, continuous-time mirror descent of $f(\x)$ under potential $\psi(\x)$ corresponds to natural gradient in the Hessian metric $\H_\psi$  \cite[Sec.~2.3]{gunasekar2018characterizing}
\begin{equation}
\dot{\x} = - \H_\psi^{-1} \nabla f(\x)
\label{eq:mirror_in_x}
\end{equation}
Consider the explicit change of variables $\z = \nabla \psi(\x)$, which can be written in differential form as $\ \delta \z = \H_\psi \delta \x$. 
The dynamics \eqref{eq:mirror_in_x} can be viewed in the mirror space as 
\begin{equation}\nonumber
\dot{\z} = \H_\psi \, \dot{\x} = - \nabla f(\x)
\label{eq:mirror_in_z}
\end{equation}
and therefore
\begin{equation}\nonumber
\ddt \ \delta \z = - \H_f \delta \x
\end{equation}
Letting  $\M(\x) = \H_\psi^2$, this yields
\begin{equation}\nonumber
\ddt \left[\delta \x^T \M(\x) \delta \x \right] = 
\ddt \left[ \delta \z^T \delta \z \right] 
= - 2 \ \delta \z\T \H_f \ \delta \x 
 = - \delta \x\T \left[ \H_f \H_\psi + \H_\psi \H_f \right] \delta \x\T 
\end{equation}
Thus, continuous mirror descent \eqref{eq:mirror_in_x} is contracting with rate $\lambda > 0$ in the metric $\M(\x) = \H_\psi^2$ if
\begin{equation}
\H_f \H_\psi + \H_\psi \H_f \succeq 2 \lambda \H_\psi^2
\label{eq:contractionConditionMirror}
\end{equation}

In the particular case when $f$ is $\alpha$-strongly convex and the potential function is chosen as $\psi(\x) = f(\x)$, equation \eqref{eq:mirror_in_x} simply corresponds to Newton's method, and \eqref{eq:contractionConditionMirror} verifies that Newton's method is contracting with rate 1 in the squared Hessian metric $\M(\x) = \H_f^2(\x)$ \cite{lohmiller2009exact}.

Note that the well-known result that the transformation $\z = \grad{\phi}{}(\x)$ is one-to-one (given the  strict convexity of $\psi$) can also be shown by constructing, for a given $\z$, the system
\begin{equation}
\dot{\x} + \grad{\psi}{}(\x) = \z \label{eq:InvertGrandientViaDynamics}
\end{equation}
which is autonomous and contracting in the identity metric and thus must reach a unique equilibrium point.

\end{example}

The following proposition provides further insight into the case when the contraction metric is related to an explicit change of variables more generally.

\begin{proposition}[Relationship between gradient and natural gradient under a diffeomorphic change of variables]\label{change_variable}
Consider a diffeomorphic change of variables $\z = \g(\x)$, and the
associated metric $\M(\x) =\boldsymbol{\Theta}(\x)\T \boldsymbol{\Theta}(\x)$, with $\boldsymbol{\Theta}(\x) = \jac{\g}{\x}$. For any twice differentiable function $f : \Rn \rightarrow \R$, natural gradient descent in $\x$ 
\[
\dot{\x} = -\M(\x)^{-1} \grad{f}{}(\x)
\]
is equivalent to gradient descent in $\z$ 
\[
\dot{\z} = - \grad{\left[f \circ \g^{-1}\right]}{}(\z)
\]
\end{proposition}
\begin{proof} In the $\z$ coordinates we have
\begin{align*}
\ \ \dot{\z} = \jac{\g}{\x} \dot{\x} \ 
= \ - \boldsymbol{\Theta} \ \M^{-1} \grad{f}{}(\x) \ 
& = \ - \left[ \jac{f}{\x} \boldsymbol{\Theta}^{-1} \right]\T \\ 
 &= \ - \left[ \jac{ f \circ \g^{-1}}{\z} \right]\T \ 
= \ - \grad{[f \circ \g^{-1}]}{}(\z)
\end{align*}
\end{proof}

\begin{proposition}
\label{prop:curvature}
Consider a metric $\M(\x)$ and suppose there exists a diffeomorphic change of variables  $\z = \g(\x)$ such that $\M(\x) =\boldsymbol{\Theta}(\x)\T \boldsymbol{\Theta}(\x)$, with $\boldsymbol{\Theta}(\x) = \jac{\g}{\x}$. Then, the associated Riemannian curvature tensor with components $R_{ik\ell m}$
must be identically zero.
\end{proposition}

\begin{proof}
Note that since $\delta \z = \boldsymbol{\Theta}(\x) \delta \x$, it follows that $\delta \z\T \delta \z = \delta \x\T \M(\x) \delta \x$ and thus the Riemannian metric tensor expressed in the $\z$ coordinates is the identity. Since the components of the Riemannian metric tensor are constant in these transformed coordinates, it follows that the components of the Riemannian curvature tensor are identically zero \cite{udriste}. Transformation laws for tensors ensure that the components of the curvature tensor remain zero under arbitrary coordinate change, thus $R_{ik\ell m}=0$.  
\end{proof}

The general freedom to consider differential changes of coordinates $\delta \z = \boldsymbol{\Theta}(\x) \delta \x$ where $\boldsymbol{\Theta}$ is non-integrable provides additional flexibility and generality to both contraction analysis and g-convexity, as illustrated by the following examples.

\begin{example}
\label{ex:DescentContracting}
Consider the non-convex function
\[
f(\x) = x_1^2+x_2^2+x_1^2 x_2^2
\]
which has a global minimum at $\x=\mathbf{0}$. Contours of the function are shown in Fig.~\ref{fig:DescentContracting}. Gradient descent 
\[
\dot{\x} = - \nabla f(\x) = -2 \begin{bmatrix} x_1 ( 1+x_2^2) \\ x_2 (1+x_1^2) \end{bmatrix} 
\]
can be shown to be contracting at rate $\lambda=2$ in the metric
\[
\M(\x) = \begin{bmatrix} 2+x_1^2 & \ -x_1 x_2 \\ -x_1 x_2 & \  2+x_2^2 \end{bmatrix}
\]
Fig.~\ref{fig:DescentContracting} shows two solutions and plots their geodesic distance. The decay is, as expected, at a rate faster than the exponentially decreasing upper bound as derived from \eqref{d_M}. The curvature tensor for this metric has some non-zero components,
such as
\[R_{1221} = \frac{2}{2+x_1^2+x_2^2}
\]
From Proposition \ref{prop:curvature}, this shows that this metric cannot be derived from an explicit change of coordinates.
\end{example}

\begin{figure}
    \centering
    \includegraphics[width=\columnwidth]{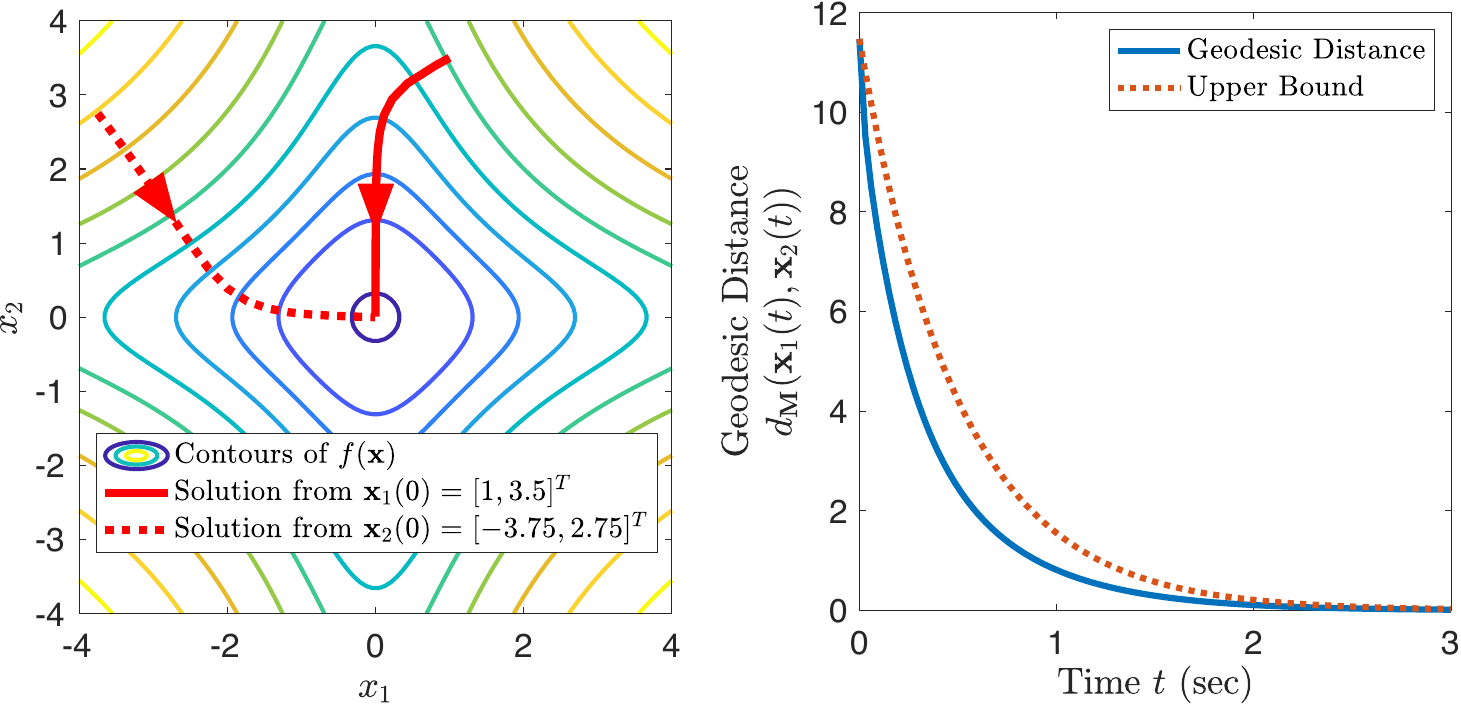}
    \caption{Contracting gradient descent corresponding to Example \ref{ex:DescentContracting}.}
    \label{fig:DescentContracting}
\end{figure}

\begin{example}
\label{ex:semiDescent}
Consider the function
\[
f(\z) =z_1^8 - z_1^6 - 2 z_1^5 z_2 - z_1^4 + z_1^2 z_2^2 + z_1^2 + 2 z_1 z_2 + z_2^2
\]
and natural gradient descent with a given natural metric $ \boldsymbol{\Theta}(\z)\T \boldsymbol{\Theta}(\z)$, where 
\[
\boldsymbol{\Theta}(\z) = \begin{bmatrix} 1 & 0 \\
1 -3 z_1^2 & 1 \end{bmatrix}
\]
This natural gradient dynamics is verified semi-contracting in the metric
\[
\M(\z) = \boldsymbol{\Theta}(\z)\T \begin{bmatrix} 1 + z_2^2 - 2 z_1^3 z_2 + z_1^6 & 0 \\
0 & 1 + z_1^2 \end{bmatrix} \boldsymbol{\Theta}(z)
\]
Similar to Example~\ref{ex:DescentContracting}, this metric has non-zero Riemannian curvature, and thus cannot be derived from a change of coordinates. Figure~\ref{fig:SemiDescent} shows the contours of $f$ and two solutions of natural gradient descent. Figure \ref{fig:SemiDescentFunction} shows that the the geodesic distance between these two solutions is non-increasing. Since the system is only semi-contracting, the distance between solutions does not tend toward zero. It can be verified that $f$ is  a sum of squares and thus $f(\z)\ge0$, and that $f(\z)=0$ when $z_2 = z_1^3-z_1$. Both initial conditions asymptotically lead to this path connected set of global optima.
\end{example}

\begin{figure}[t]
    \centering
    \includegraphics[width=\columnwidth]{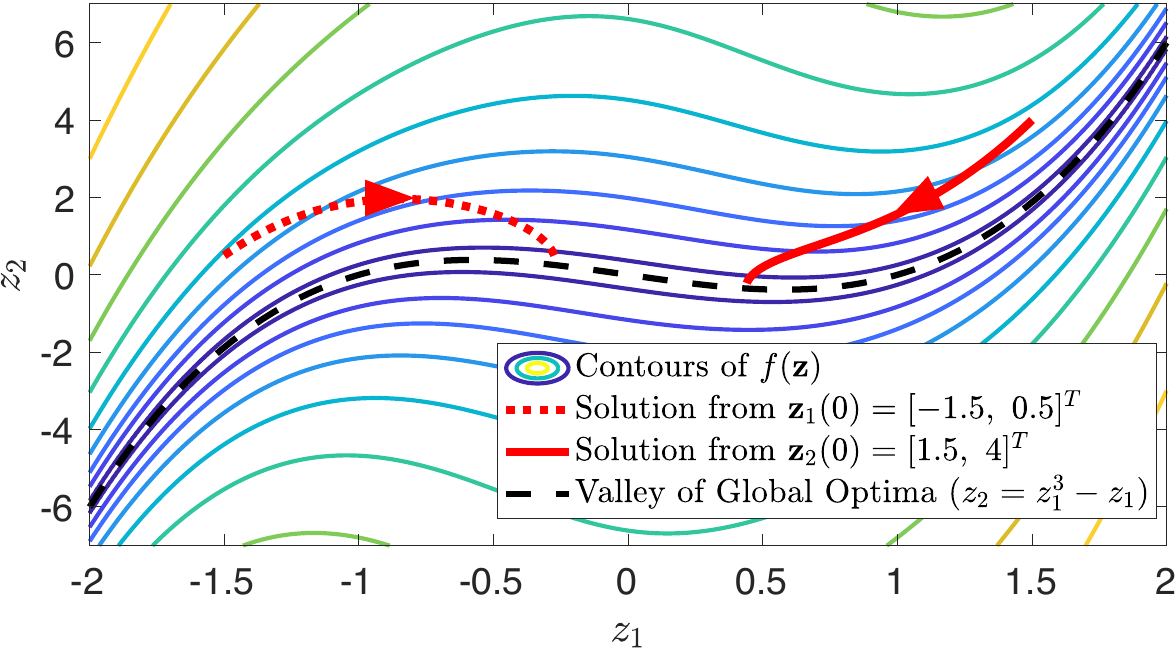}
    \caption{Semi-Contracting Natural Gradient Descent for Example~\ref{ex:semiDescent}. }
    \label{fig:SemiDescent}
\end{figure}

\begin{figure}[t]
    \centering
    \includegraphics[width=\columnwidth]{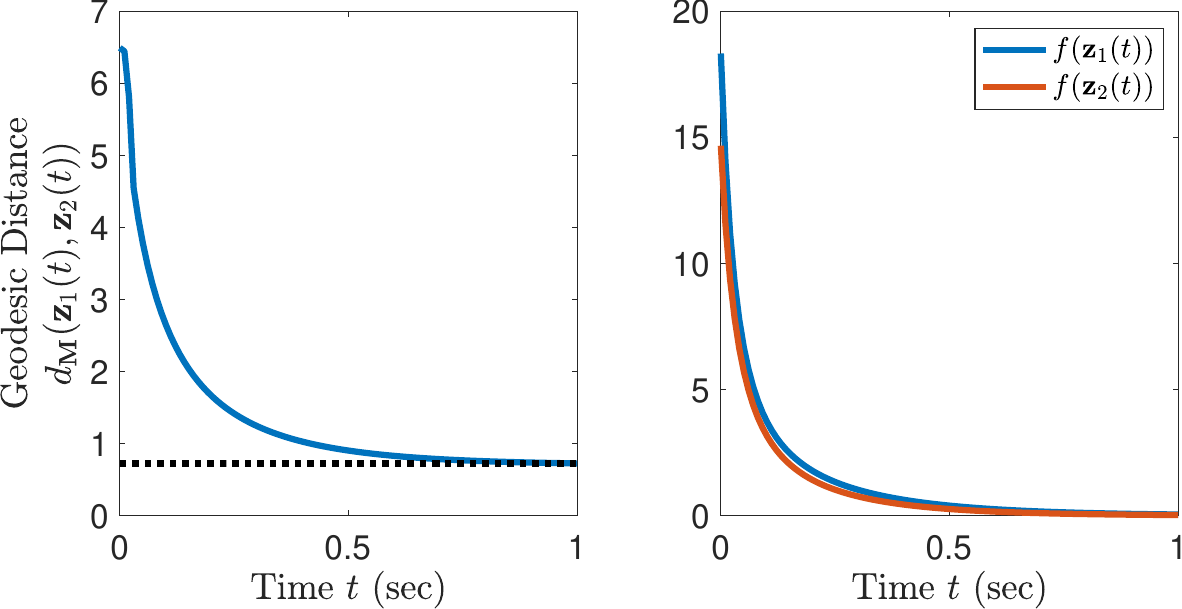}
    \caption{Semi-Contracting Natural Gradient Descent for Example~\ref{ex:semiDescent}. }
    \label{fig:SemiDescentFunction}
\end{figure}

\begin{example}
Geodesically-convex optimization can also be used to carry out manifold-constrained optimization in an unconstrained fashion via recasting problems over a Riemannian manifold directly \cite{Absil08,SRA_Conic_Optimization}. Taking an intrinsic view of the manifold, coordinate free results are available \cite{bullo}, however, for the purposes of computation, we assume a global coordinate chart here. Consider for instance optimization over the set $\mathbb{S}^n_+$ of $n\times n$ positive definite matrices, and specifically the problem of finding the Karcher mean of $m$ matrices $\A_i \in \mathbb{S}^n_+$ \cite{Sra_First_Order_Methods}, which minimizes the objective function 
\[
f(\X) = \frac{1}{2}\sum_{i=1}^m\| {\rm log}(\A_i^{-1}\,\X) \|_F^2
\]
where $\|\A\|_F=\sqrt{{\rm tr}(\A\T \A)}$ denotes the Frobenius norm of a matrix $ \A$. The function $f(\X)$ is $m$-strongly convex~\cite{Sra_First_Order_Methods} on $\mathbb{S}^n_+ \ $in the metric that measures symmetric differential displacements as
\begin{equation}
ds^2 = {\rm tr}\left( \left( \delta \X \ \X^{-1} \right)^2 \right)  
\label{eq:PSD_Riem}
\end{equation}
Naturally, the requirement that $\delta \X$ be symmetric makes it an element of the tangent space to the manifold of symmetric positive definite matrices.  

This metric generalizes the GP case \eqref{eq:GP_Gconvex}, and coincides with the second-order terms in the Taylor series of the log barrier $-{\rm logdet}(\X)$ \cite{BoydVanberghe04}. The gradient of $f(\X)$ can be written
\[
\grad{f}{}(\X) = \sum_{i=1}^m {\rm log}(\A_i^{-1}\,\X )\,  \X^{-1}
\]
and accordingly the natural gradient can be shown to satisfy
\[
\sum_{i=1}^m \ \X \, {\rm log}(\A_i^{-1}\,\X) = \X \, \grad{f}{}(\X) \, \X
\]
From Theorem \ref{th_main},  any trajectory with arbitrary initial condition $\X(0) \in \mathbb{S}^n_+$ will remain within $\mathbb{S}^n_+$ under the natural gradient descent dynamics
\[
\dot{\X} = - \sum_{i=1}^m  \ \X \, {\rm log}(\A_i^{-1}\,\X)
\]
since (intuitively) the Riemannian metric \eqref{eq:PSD_Riem} makes any element on the boundary of the positive definite cone an infinite distance away from any one in the interior, and contraction of the natural gradient dynamics ensures that geodesic distances decrease exponentially.
\end{example}

\begin{example}
\label{ex:MatBreg}
An approximation to the Riemannian distance of two positive definite (PD) matrices on the PD cone is given by the Bregman LogDet divergence on $\mathbb{S}_+^n$
\begin{equation}
    \label{eq:breg}
d(\A||\X) = {\rm logdet}(\A^{-1} \X) + {\rm tr}(\X^{-1} \A) - n
\end{equation}
The metric is convex in its first argument, and can be shown to be geodesically convex in the second. We illustrate the connection with contraction to show this property. Note that
\[
\grad{d(\A||\X)}{\X} = \X^{-1} - \X^{-1} \A \X^{-1} 
\]
so that the natural gradient descent dynamics are simply
\[
\dot{\X} = - \X \left[ \grad{d(\A||\X)}{\X} \right] \X = -\X + \A
\]
with differential dynamics 
\[
\delta \dot{\X} = - \delta \X
\]
where the differential displacement $\delta \X$ must be symmetric.
Considering the rate of change in length of these differential displacements
\[
\ddt {\rm tr}\left( (\X^{-1} \, \delta \X)^2 ) \right) = -2{\rm tr}(\X^{-1} \A (\X^{-1} \delta \X^{-1})^2 )
\]
and defining the differential change of variables $\delta \Z = \X^{-\frac{1}{2}}\, \delta \X \, \X^{-\frac{1}{2}}$, one has $\tr{ \delta \Z^2} = {\rm tr}\left( (\X^{-1} \, \delta \X)^2  \right) $ and
\begin{align}
\ddt \tr{ \delta \Z^2} = -2 \tr{ \delta \Z \, \X^{-\frac{1}{2}} \A \X^{-\frac{1}{2}} \,\delta \Z}   = -2 \| \A^{\frac{1}{2}} \X^{-\frac{1}{2}} \,\delta \Z\|_F^2 <0
\end{align}
for all $\delta \Z\ne \mathbf{0}$. Hence, considering only the second argument to LogDet divergence, its Riemannian Hessian is positive definite, thus proving g-convexity via Thm.~\ref{th_main}.
\end{example}

\subsection{Non-autonomous Systems and Virtual Systems}

In our optimization context, the fact that contraction analysis is directly applicable to non-autonomous systems can be exploited in a variety of ways. As we shall detail later, a key aspect is that it allows feedback combinations or hierarchies of contracting modules
to be exploited to address more elaborate optimization problems or architectures. Also, it makes the construction of \emph{virtual} systems~\cite{wei} possible to potentially extend results beyond natural-gradient descent.

\begin{remark}  The natural gradient $\ \M^{-1}(\x)\ \grad{f}{\x}(\x,t)$ represents the direction of steepest ascent on the manifold at any given time. With this in mind, Remark~\ref{rem:robust} on robustness enables convergence analysis for natural gradient descent within time-varying optimization contexts~\cite{Kostya}. Let $\x^*(t)$ denote the optimum of a time-varying $\alpha$-strongly g-convex function. If $\sqrt{\dot{\x}^*(t)\T \M(\x) \dot{\x}^*(t)}<R$, then the natural gradient will track $\dot{\x}^*(t)$ with accuracy $R/\alpha$ after exponential transient.
\end{remark}

\begin{remark}
Consider a contracting natural gradient system of the form \eqref{eq:NatGrad}. In the autonomous case, equations governing the differential displacement follow
\begin{equation}\label{delta_x}
\ddt \ \delta \x \ = \ \Jac{\h}{\x} \ \delta \x 
\end{equation}
which has a similar structure to the time evolution of $\h(\x)$
\begin{equation}\label{ddt_h}
\ddt \ \h(\x) \ = \ \left( \grad{ \h}{}(\x) \right) \ \h(\x)
\end{equation}
Thus, for natural gradient descent of an $\alpha$-strong g-convex function $f(\x)$, the same algebra leading to~\eqref{ddt_z^2} also gives 
\[
\ddt (\h\T \M \h) = -2 \h\T \H \h \le - 2 \alpha (\h\T \M \h)
\]
so that the Krasovskii-like function
\[
V(\x) = \h(\x)\T \M(\x) \h(\x) = \grad{f}{}(\x)\T \M(\x)^{-1} \grad{f}{}(\x)
\]
can be viewed as an exponentially converging
Lyapunov function, with global minimum $V=0$ at the unique 
minimum of $f(\x)$. Of course, \eqref{delta_x} remains
valid for \emph{non-autonomous} systems as well, while~\eqref{ddt_h} does not.
\end{remark}


The use of virtual contracting systems~\cite{wei,jouffroy,manchester2017CCCM} allows guaranteed exponential convergence to a unique minimum to be extended to classes of dynamics which are not pure natural gradient. For instance, it is common in optimization to adjust
the learning rate as the descent progresses.
Consider a natural gradient descent with the function $f(\x)$ $\alpha$-strongly g-convex in metric $\M(\x)$, and define the new system
\begin{equation}
\dot{\x} = - p(\x,t)\, \M(\x)^{-1}\, \grad{f}{}(\x)
\label{p(x,t)}
\end{equation}
where the smooth scalar function $p(\x,t)$ modulates the learning rate~\cite{Amari98b} and is uniformly positive definite,
\[ \exists \ p_{min} > 0 , \ \forall t \ge 0, \ \forall \x, \ \ \ \
p(\x,t) \ge p_{min} 
\]
Let us show that this system tends exponentially to the minimum $\x^*$ of $f(\x)$.

Consider the auxiliary, \emph{virtual} system,
\begin{equation}
\dot{\y} = - \M(\y)^{-1}\, \grad{(\ p(\x,t)f(\y)\ )}{\y}
\label{virtual_p(x,t)}
\end{equation}
For this system, $p(\x(t) ,t)$ is an external, uniformly positive definite function of time, and thus 
\[\grad{(\ p(\x,t)f(\y)\ )}{\y} \ = \ p(\x,t) \ \grad{f}{}(\y)
\]
so that the contraction of~\eqref{eq:NatGrad} with rate $\alpha$ implies the contraction of~\eqref{virtual_p(x,t)} with rate $ \ \alpha p_{min} \ $. Since both $\x(t)$ and $\x^*$ are particular solutions
of~\eqref{virtual_p(x,t)}, this implies in turn that 
$\x(t)$ tends to $\x^*$ with rate $ \ \alpha p_{min} \ $.


Note that since we only assumed that $p(\x,t)$ is uniformly positive definite, in general the actual system~\eqref{p(x,t)} is not contracting with respect to the metric $\M(\x)$. 

\begin{remark}
The  learning rate may also be selected to improve the numerical properties of the algorithm in a discrete time implementation. For example, $p(\x,t)$ could vary as the inverse of the condition number of $\nabla^2 f(\x)$ to improve numeric conditioning without impact on stability guarantees.
\end{remark}

\subsection{Contraction $+$}

Corollary~\ref{think_locally} above points to a more general class of results where contraction or semi-contraction properties are combined with other information, such as a stable local linearization or a decreasing cost, to provide global results.

\subsubsection{Contraction is attractive} 

As we now show, Corollary~\ref{think_locally} extends more generally to autonomous semi-contracting systems. An instance of this result in the case of an identity metric was derived in~\cite{bulloSemi2020}.

\renewcommand{\bl}{\beta_0}
\renewcommand{\bu}{\beta_1}
\newcommand{\I}{\mathbf{I}}

\begin{proposition} \label{7}
Consider an autonomous system
\begin{equation}
\dot{\x} = \h(\x)
\label{eq:autonomousSystem}
\end{equation}
semi-contracting in a bounded metric $\M(\x)$,
\[
{\bf 0} \prec \bl^2 \I \preceq \M(\x) \preceq \bu^2 \I \quad \quad \forall \x
\] 
If a system equilibrium is locally asymptotically stable, then it is
globally asymptotically stable. In particular, if the system linearization at some equilibrium point is strictly stable, then all system trajectories tend to this equilibrium.
\end{proposition}

\begin{proof} 
The result is a particular case of Theorem~\ref{3}, to be
discussed next.
\end{proof}

\begin{theorem}\label{3}
Consider a non-autonomous system, semi-contracting in a bounded metric $\M(\x)$,
\begin{equation}
\label{eq:metricConditions}
{\bf 0} \prec \bl^2 \I \preceq \M(\x) \preceq \bu^2 \I \quad \quad \forall \x
\end{equation}
Assume that a \emph{specific} trajectory $\x^\star(t)$ is locally attractive. Then all trajectories tend asymptotically to $\x^\star(t)$.

In particular, if contraction holds (possibly in a different bounded metric) along a specific trajectory $\x^\star(t)$, and within a tube of constant size around it, then all trajectories tend asymptotically to $\x^\star(t)$.
\end{theorem}

\newcommand{\B}{\mathcal{B}}
\newcommand{\xl}{\underline{\x}}
\begin{proof} 
The first part generalizes the equilibrium argument from \cite{bulloSemi2020} to arbitrary trajectories and arbitrary metrics.
Assume that $\x^*(t)$ is locally attractive, by which we mean there exists some $\epsilon>0$ such that, for any initial time $t_0$ and initial condition $\x_0 \in \B_{
\mathbf{I}}( \x^*(t_0), \epsilon ) $, one has $\x(t_0+T; \x_0, t_0) \rightarrow \x^*(t_0+T)$ as $T\rightarrow +\infty$. Without loss of generality, we assume $t_0= 0$. 

\newcommand{\dI}{d_{\mathbf{I}}}

Consider some generic initial condition $\x_0$ with $\dI(\x_0, \x^*(0)) > \epsilon$. We will argue that there is always a finite time window over which the geodesic distance from $\x(t;\x_0)$ to $\x^*(t)$ decreases by a fixed finite increment. 

Consider a geodesic connecting $\x_0$ and $\x^*(0)$ and denote by $\xl$ the unique point on this geodesic that is a geodesic distance $\bl \epsilon$ away from $\x^*(0)$. Due to the uniform positive definiteness of $\M$, this condition implies that $\xl \in \B_{\mathbf{I}}(\x^*(0),\epsilon)$.

Because of the local attractivity of $\x^*(t)$, there exists a time $t_1>0$ such that
\[
d_\I(\x(t_1 ; \xl) , \x^*(t_1) ) \le \frac{\epsilon \bl}{2 \bu}
\]
which further implies that
\[
d_\M(\x(t_1;\xl),  \x^*(t_1) ) \le \bl \frac{\epsilon}{2}
\]
In addition, since the system is semi-contracting, we have
\[
d_\M(\x(t_1; \xl) , \x(t_1; \x_0) ) \le d_\M(  \xl , \x_0 )
\]
and so by the triangle inequality
\begin{align*}
d_\M( \x^*(t_1) , \x(t_1;\x_0) ) \le d_\M(\xl, \x_0) + \bl \frac{\epsilon}{2}
&= d_\M(\xl, \x_0)  + d_\M(\x^*(0), \xl) \\
&\phantom{=}~~~~~~~~~~~ - d_\M(\x^*(0), \xl) + \bl \frac{\epsilon}{2} \\
&= d_\M(\x^*(0),\x_0) - \bl \frac{\epsilon}{2} 
\end{align*}
This implies that so long as $\dI( \x_0, \x^*(0)) > \epsilon$, the trajectory from $\x_0$ will eventually decrease its geodesic distance by a fixed finite increment. Since this process can be repeated, it follows that there must exist some time $T$ such that $\x(T; \x_0) \in \B_{\mathbf{I}}(\x^*(T),\epsilon)$.

To complete the second part of the proof we proceed to show that if contraction \eqref{eq:contraction_condition} holds within a tube of constant size around trajectory $\x^*(t)$,  for some bounded metric $(\beta_0^\star)^2 \I \preceq \M^\star(\x) \preceq (\beta_1^\star)^2 \mathbf{I} \ $ and some rate $\alpha^\star>0$, then that trajectory is locally attractive. By  condition \eqref{eq:contraction_condition}  holding within a tube we mean that there exists some $\epsilon >0$ such that \eqref{eq:contraction_condition} holds for any time $t$ and any $\x \in \B_\I(\x^*(t),\epsilon)$. From boundedness of the metric, we have
\[
\beta_0^\star \, d_{\I}(\x^*(t), \x) \le d_{\M^\star}( \x^*(t), \x) 
\]
so that any initial condition $\x_0$ satisfying
\[
d_{\M^*}(\x^*(0), \x_0) \le \epsilon \beta_0^*  
\]
necessarily starts within this tube. Further, since \eqref{eq:contraction_condition} holds within the tube, it follows that the geodesic ball of radius $\epsilon \beta_0^\star$ around $\x^\star(t)$ is forward invariant. Since this ball is contained within a contraction region, this implies that for any $\x_0 \in \B_\M(\x^\star(0), \beta_0^\star \epsilon)$
\[
d_{\M^*}(\x^*(t), \x(t;\x_0 ) ) \le e^{-\alpha^\star t} \  d_{\M^*}(\x^*(0), \x_0 ) 
\]
which proves local asymptotic stability of $\x^\star(t)$.
\end{proof}


\begin{remark}
The condition regarding contraction within a tube of fixed size is included to avoid pathological cases where the region of contraction shrinks to zero as $t\rightarrow +\infty$. For example, the system $\dot{x} = -x + t x^3$ is contracting with rate $1$ at the origin for all time,  yet the origin is not locally asymptotically stable. 
\end{remark}

\begin{remark}
Intuitively, the result can be understood by analogy with a shrinking rope. Consider a path of initial conditions connecting $\x^\star(0)$ to any $\x_0$. As this path flows forward in time, at $t=0$, only a portion of this path of states is within the basin of attraction for $\x^\star(t)$. Viewing this path as a rope, the semi-contraction property ensures that no part of the rope can increase in length as it flows forward through the dynamics. Yet, due to local attractivity at one end of the rope, a portion of it is guaranteed to have shrinking length, pulling the rest of the rope toward the the region of attraction.  
\end{remark}

\begin{remark}
Numerical tools for determining contraction metrics \cite{aylward2008stability} are based on the fact that contraction conditions \eqref{eq:contraction_condition} are convex in the metric {\em for a fixed contraction rate}. In practice, these methods often involve an outer search procedure for the contraction rate (e.g., via a binary search). In this sense, the use of semi-contraction is desirable as it does not require this additional search.  
\end{remark}

\begin{remark}
These results have analogs in the context of controller design using 
control contraction metrics (CCMs)~\cite{manchester2017CCCM,singh2017robust}. In this setting, one can impose a semi-contracting closed-loop metric everywhere, except in a tube along a desired trajectory where a strict contraction condition would be required, possibly in a different metric. Since the existence of an exponential (resp. semi) CCM implies that the closed-loop plant can be rendered contracting (resp. semi-contracting), Theorem~\ref{3} would then imply asymptotic stabilizability of the desired trajectory.

This extension likewise has analogs for manifold convergence results \cite[Section 5]{manchester2017CCCM} and  convergence to a limit cycle by transverse contraction~\cite{Manchester14_transverse}, both of which are special cases of CCM results applied to suitably constructed virtual control systems~\cite{manchester2017CCCM}. In either case, a semi-contracting CCM everywhere can be combined with a contracting CCM condition on the manifold (or limit cycle) and within a neighborhood of it to assert asymptotic stability of the manifold (or limit cycle). In the limit cycle case for autonomous systems, the contracting CCM condition needs only be enforced on the limit cycle itself, as its satisfaction within some neighborhood is then guaranteed by compactness. Likewise, for convergence to a compact manifold (e.g., an eggshell) in an autonomous system, the contracting CCM condition needs only be considered on the manifold itself.
\end{remark}

\subsubsection{Contraction as minimization}

Similarly, Proposition \ref{prop:contracting_grad_desc} may be viewed as a particular instance of the following results, which use contraction properties to minimize a cost or Lyapunov-like function.

\begin{proposition}[Exponential Cost Minimization]\label{8}
Consider an autonomous contracting system \eqref{eq:autonomousSystem}, and a scalar cost function
$V(\x)$ such that $\dot{V}(\x) \le 0 $ for all $\x$. Then all 
trajectories tend exponentially to a global minimum of $V$.
\end{proposition}
\begin{proof} Because the system is contracting and autonomous, it
tends exponentially to a unique equilibrium $\x^\star$ \cite{Slotine98}. Consider
now an arbitrary $\x$, and the  system trajectory initialized at $\x$. Since the cost $V$ can only decrease along the trajectory, this implies that  $V(\x^\star) \le V(\x)$, for all $\x$.
\end{proof}

\begin{proposition}\label{9}
Consider an autonomous semi-contracting system \eqref{eq:autonomousSystem} in a bounded metric $\M(\x)$, and a scalar cost function $V(\x)$ such that $\dot{V}(\x) \le 0 $ for all $\x$. Assume that one system equilibrium $\x^\star$ is locally attractive (e.g., that linearization at $\x^\star$ is strictly stable). Then this equilibrium is unique, it is a global minimum of $V$, and all trajectories converge to it asymptotically. 
\end{proposition}

\begin{proof}
Applying Proposition~\ref{7} shows that all trajectories asymptotically
tend to $\x^\star$, which also implies that the equilibrium is unique.  By the same reasoning as in Proposition~\ref{8}, since $V$ can only decrease, $V(\x^\star)$ must be a global minimum.
\end{proof}

\begin{remark}\label{16}
These results extend readily to the case where a system is semi-contracting within some forward invariant region, as opposed to globally. These generalizations may have applicability e.g., to the continuous-time limit of trained neural networks~\cite{deep_residual,neural_ode,dupont}, wherein semi-contraction regions represent basins of attraction that are free of saddles. Metrics may become singular as they approach the boundary of these open sets \cite[Section 3.9]{Slotine98}, allowing the semi-contraction region to cover the entire basin. 
\end{remark}

Proposition~\ref{9} can be stated more generally as follows.

\begin{theorem}[Asymptotic Cost Minimization]\label{10}
Consider an autonomous semi-contracting system in a bounded metric $\M(\x)$, and a scalar cost function $V(\x)$ such that $\dot{V}(\x) \le 0 $ for all $\x$. Assume that one trajectory is known to be bounded.
Let $\script{I}$ be a forward invariant set where $\dot{V}=0$, and assume that the contraction condition \eqref{eq:contraction_condition} holds on $\script{I}$ for some (possibly different) metric. 

Then $\script{I}$  is path connected, all system trajectories converge to a unique equilibrium $\x^\star \in \script{I}$, and $V$ is globally 
minimized at $\x^\star $.
\end{theorem}

\begin{proof}
Let us first show that $\script{I}$ is path connected, by contradiction. 
Assume $\script{I}$ is not path connected, then it can be decomposed into
two disjoints subsets, $\script{I}_1$ and $\script{I}_2$. Because $\script{I}$ is invariant and the subsets are disjoint, each of the subsets must be invariant. Strict contraction on $\script{I}_1$ and $\script{I}_2$ then implies that each subset contains at least one locally stable equilibrium point (note that each of the subsets may themselves be disconnected and thus may contain more than one stable equilibrium point). The existence of two equilibrium points contradicts Proposition~\ref{9}, and thus $\script{I}$ is path connected.

Next, on the connected invariant set $\script{I}$, contraction implies that the geodesic distance between any two points shrinks exponentially. By the same reasoning as in Proposition~\ref{8}, this in turn implies convergence to a global minimum of $V$.
\end{proof}

\begin{remark} 
Note that for a system where a scalar cost $V$ satisfies $\dot{V}\le 0$, radial unboundedness of  $V$ is a sufficient condition for all trajectories to be bounded, ensuring the existence of a bounded trajectory as necessary in Thm.~\ref{10}. 
\end{remark}

\begin{remark}
In the case of mechanical systems, $V$ may often be chosen as the total energy of the system, so that Proposition~\ref{8} implies exponential convergence of the total energy, and, in turn, that potential energy is exponentially minimized. Similarly, Theorem~\ref{10} implies
that potential energy is asymptotically minimized.
\end{remark}

\begin{remark}
Contraction criteria can also be expressed in non-Euclidean norms and their associated matrix measures (\cite{Slotine98}, section 3.7.ii). The results above extend immediately to these representations.
\end{remark}

\section{Primal-Dual Optimization}\label{GPD_opt}
   
Primal-Dual algorithms are widely used
in optimization to determine saddle points
and also appear naturally in constrained optimization~\cite{BoydVanberghe04}, where
Lagrange parameters play the role of dual variables.
When a function is strictly convex in a subset of its variables, and strictly concave in the remaining, gradient descent/ascent dynamics converge to a unique saddle 
equilibrium~\cite{arrow1958studies,paganini}. Within the context of constrained optimization, these dynamics are known as the primal-dual dynamics.
Such dynamics play an important role e.g., in machine learning, for instance in adversarial training~\cite{madry2017towards}, in the information theory~\cite{tishby2015deep} of deep networks, in
reinforcement learning~\cite{Wang17} and actor-critic methods,
and in support vector machine representations~\cite{kosaraju2018primal}. More generally, they are central to a large class of practical min-max problems, such as problems in physics involving
free energy, or, e.g.,
nonlinear electrical networks modeled in terms of Brayton-Moser mixed potentials~\cite{Kostya,ortega2003power, cavanagh2018transient}.

Consider a scalar function $\script{L}({\bf x},{\bf \llambda}, t)$,
possibly time-dependent, and metrics 
$\M_\x(\x)$ and $\M_{\llambda}(\llambda)$.
Consider the natural primal-dual dynamics, which 
we define as
\begin{subequations} \label{eq:PD}
\begin{align} \label{eq:primal}
{\bf M_\x}({\bf x})\ \dot{\bf x} \ &= -\nabla_{\bf x} \script{L}(\x,\llambda,t) \\
{\bf M_\llambda}({\bf \llambda})\ \dot{\llambda} \ &= \ \nabla_{\llambda}  \script{L}(\x,\llambda,t) \label{eq:dual}
\end{align}
\end{subequations}
In contrast to Remark \ref{rem:bad_saddles}, wherein spurious saddle equilibrium points presented an obstacle to global contraction, here the target equilibrium points of these dynamics are, by construction, chosen to be the saddle points of the function $\script{L}$.
Using the metrics ${\bf M_x}({\bf x})$ and ${\bf M_\llambda}({\llambda})$ extends the standard case \cite{Kostya}, where they would be replaced by constant, symmetric positive definite matrices. The practical relevance of this extension is illustrated by the following example in the case of natural adaptive control.

 

\subsection{Primal Dual Dynamics in Natural Adaptive Control}

This section illustrates the presence of natural primal-dual dynamics embedded in the application of natural adaptive control laws. Consider a system given by 
\begin{equation}
    \label{eq:sys_for_adaptive}
\J(\x,\ldots,\x^{(n-2)},\a) \x^{(n)} + \Y(\x,\ldots,\x^{(n-1)}) \a = \u
\end{equation}
with configuration $\x\in \R^N$, control $\u \in \R^N$, and unknown parameters $\a \in \Acal \subset \R^p$. The regressor $\Y \in \R^{N \times p}$ and symmetric matrix $\J \in \R^{N\times N}$ may depend nonlinearly on the state and its derivatives. We assume that the matrix $\J$ remains positive definite for all $\a \in \Acal$ and that it is linear in $\a$. As a result, there exists a regressor function $\W$ such that
\begin{align}
\W(\x, \ldots, \x^{(n-1)}, \x^{(n)}) \a \ =&  \ \J(\x,\ldots,\x^{(n-2)},\a) \x^{(n)} \\&+ \Y(\x,\ldots,\x^{(n-1)}) \a
\end{align}
and a regressor function $\Q$ such that for any $\s\in\R^N$
\[
-\frac{1}{2} \dot{\J} \s = \Q(\x, \ldots, \x^{(n-1)}, \s)\, \a
\]

Consider a desired trajectory $\x_d(t)$ and the associated sliding variable \cite{slotine1987adaptive,slotine1991applied}
\begin{equation}
\s = \left(\ddt + \lambda\right)^{n-1} \xt = \x^{(n-1)} -\x_r^{(n-1)} 
\label{eq:sliding}
\end{equation}
where $\xt = \x - \x_d$. With this sliding variable, we define a reference $\x_r^{(n-1)}$ for the order $n-1$ derivative of the state. 

Choosing the control law
\[
\u = -\K \s + \Jh \x_r^{(n)} + \Y(\x, \ldots, \x^{(n-1)}) \ah - \frac{1}{2} \Jdh \s
\]
where $\x_r^{(n)} = \ddt \x_r^{(n-1)}$ provides the closed-loop dynamics
\begin{align}
\J \dot{\s} + \K \s 
= \W \at + \Q \ah
\label{eq:sdot}
\end{align}

Inspired by the elegant modification of the Slotine and Li adaptive robot controller introduced by Lee et al.~\cite{LeeKwonPark18,Lee19}, we consider the Lyapunov-like function
\[
V = \frac{1}{2} \s\T \J \s + d_f(\a || \ah)
\]
where $d_f(\a || \ah)$ denotes the Bregman divergence of a function $f$ assumed convex on $\Acal$ and given by
\[
d_f(\a || \ah) = f(\a) - f(\ah) - < \nabla f(\ah), \a- \ah>
\]
Note that the LogDet divergence from \eqref{eq:breg} follows this form for $f(\X) = - \logdet{\X}$.
Here, we consider the case when $\Acal$ is open and $f$ is chosen as a convex barrier function on $\Acal$, such that the Hessian metric $\H = \nabla^2 f(\x)$ endows $\Acal$ with a barrier Hessian manifold structure \cite{Lee19,nesterov2002riemannian}.
Note that if $f$ is a second-order function $\frac{1}{2}\a^T \P \a$, the Bregman divergence
is simply $\frac{1}{2}\at\T \P \at$, with $\H^{-1}$ equal to the constant matrix $\bf{P}^{-1}$ similar to the standard adaptive algorithm \cite{slotine1987adaptive}.

A quick calculation shows that the derivative of the Bregman divergence is simply $\dot{\ah}\T \H \ \at \ $, 
so that the adaptation law
\begin{equation}
\dot{\ah} = - \H^{-1} (\W + \Q)\T \ \s
\label{eq:ahat_dot}
\end{equation}
yields
\begin{align*}
\dot{V} &= - \s\T \K \s + \s\T (\W + \Q) \at + \dot{\ah}\T \H \ \at \ = \ - \s\T \K \s \ \le \ 0
\end{align*}

\noindent Considering a virtual system with $\W$ and $\Q$ as externally provided functions of time, the dynamics \eqref{eq:sdot} and \eqref{eq:ahat_dot} are equivalent to natural primal-dual over the function
\[
\script{L}  = \frac{1}{2} \s\T \K \s - \s\T \W \at  - \s\T \Q \ \ah
\]
in the decoupled metric $\M_\s = \J$ and $\M_{\ah} = \H(\ah)$. Overall, this construction enables the results in natural adaptive robot control~\cite{LeeKwonPark18,Lee19} to be extended to the broader class \eqref{eq:sys_for_adaptive}.
 

\begin{remark}
Note that a similar construction could be applied to provide natural adaptation within recent applications of nonlinear adaptive control \cite[Thm. 2]{LopezSlotine19} based on control contraction metrics \cite{manchester2017CCCM,singh2017robust}. 
\end{remark}

\subsection{Natural Primal Dual}

Continuous-time convex primal-dual optimization is analyzed from a nonlinear contraction perspective in~\cite{Kostya}, building on a earlier result of~\cite{modular}. As we now show, Theorem~\ref{th_main} yields a natural extension to geodesic primal-dual optimization, where convexity in terms of primal and dual variables is replaced by g-convexity, thus broadening the above results to state-dependent metrics.




\begin{theorem}\label{th_PD}
Consider a scalar function $\script{L}({\bf x},{\bf \llambda}, t)$,
with $\script{L}$ g-strongly convex over $\x$ and g-strongly concave over $\llambda$ in metrics 
$\M_\x(\x)$ and $\M_{\llambda}(\llambda)$ respectively. Then, the geodesic primal-dual dynamics~\eqref{eq:PD} is globally contracting, in metric 
\begin{equation}\label{GPD}
   {\bf M}({\bf x},{\bf \llambda})  = \begin{bmatrix}
     {\bf M_x}({\bf x}) & {\bf 0}\\
    {\bf 0} & {\bf M_\llambda}({\bf \llambda})
    \end{bmatrix}
\end{equation}
\end{theorem}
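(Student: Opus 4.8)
The plan is to compute the block Jacobian of the combined dynamics \eqref{eq:PD}, plug it into the contraction condition \eqref{eq:contraction_condition} for the block-diagonal metric \eqref{GPD}, and show the symmetric part is bounded above by $-2\alpha\mathbf{M}$. First I would rewrite the dynamics in the natural-gradient form $\dot{\x} = -\M_\x^{-1}\partial_\x\script{L}$ and $\dot{\llambda} = +\M_\llambda^{-1}\partial_\llambda\script{L}$, so that Theorem~\ref{th_main} applies blockwise. The key observation is that the combined system is \emph{not} itself a natural gradient flow of a single g-convex function (because of the sign flip in the dual block), so Theorem~\ref{th_main} cannot be invoked directly on the whole system; instead it must be applied separately to each diagonal block while the off-diagonal coupling is handled by hand.

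The main steps, in order: (1) Form the Jacobian $\mathbf{A} = \partial_{(\x,\llambda)}\h$ as a $2\times2$ block matrix. Its $(\x,\x)$ block is $\partial_\x(-\M_\x^{-1}\partial_\x\script{L})$, its $(\llambda,\llambda)$ block is $\partial_\llambda(\M_\llambda^{-1}\partial_\llambda\script{L})$, and the off-diagonal blocks are $-\M_\x^{-1}\partial_{\x\llambda}\script{L}$ and $\M_\llambda^{-1}\partial_{\llambda\x}\script{L}$ respectively. (2) Compute $\dot{\mathbf{M}} + \mathbf{A}\T\mathbf{M} + \mathbf{M}\mathbf{A}$. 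Because $\mathbf{M}$ is block diagonal and $\M_\x$ depends only on $\x$ while $\M_\llambda$ depends only on $\llambda$, the diagonal blocks of $\dot{\mathbf{M}}$ are exactly $\dot{\M}_\x$ and $\dot{\M}_\llambda$ as they appear in \eqref{eq:H}. (3) For the $(\x,\x)$ diagonal block, apply Theorem~\ref{th_main} (equation \eqref{eq:H}) to identify $\M_\x\mathbf{A}_{\x\x} + \mathbf{A}_{\x\x}\T\M_\x + \dot{\M}_\x = -2\mathbf{H}_\x \preceq -2\alpha\M_\x$ by g-strong convexity over $\x$; similarly, g-strong \emph{concavity} over $\llambda$ means $-\script{L}$ is g-strongly convex, so the $(\llambda,\llambda)$ block gives $\M_\llambda\mathbf{A}_{\llambda\llambda} + \mathbf{A}_{\llambda\llambda}\T\M_\llambda + \dot{\M}_\llambda = -2\mathbf{H}_{-\script{L},\llambda} \preceq -2\alpha\M_\llambda$. (4) For the off-diagonal blocks, observe the crucial cancellation: $\mathbf{M}\mathbf{A}$ has off-diagonal block $\M_\x\cdot(-\M_\x^{-1}\partial_{\x\llambda}\script{L}) = -\partial_{\x\llambda}\script{L}$ in the $(\x,\llambda)$ slot, while $\mathbf{A}\T\mathbf{M}$ contributes the $(\x,\llambda)$ block $(\M_\llambda\mathbf{A}_{\llambda\x})\T = (\partial_{\llambda\x}\script{L})\T = \partial_{\x\llambda}\script{L}$, using symmetry of mixed partials. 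These cancel, so the off-diagonal blocks of $\dot{\mathbf{M}}+\mathbf{A}\T\mathbf{M}+\mathbf{M}\mathbf{A}$ vanish identically. (5) Assemble: the full symmetric matrix is block diagonal with blocks $-2\mathbf{H}_\x$ and $-2\mathbf{H}_{-\script{L},\llambda}$, hence $\preceq -2\alpha\,\mathrm{diag}(\M_\x,\M_\llambda) = -2\alpha\mathbf{M}$, which is exactly \eqref{eq:contraction_condition}.

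The step I expect to be the main obstacle is step (4), the off-diagonal cancellation: one must be careful that the "$\dot{\mathbf{M}}$" used in the contraction condition is $\sum_i(\partial_{x_i}\mathbf{M})h_i$ along the \emph{combined} flow, and verify that the cross-terms $\partial_\llambda\M_\x$ and $\partial_\x\M_\llambda$ genuinely vanish because $\M_\x = \M_\x(\x)$ and $\M_\llambda=\M_\llambda(\llambda)$ have disjoint dependencies — this is what makes $\dot{\mathbf{M}}$ stay block diagonal and lets step (3) go through verbatim. Once that is in hand, the cancellation of the off-diagonal Hessian coupling via equality of mixed partials is the standard mechanism behind contraction of primal-dual dynamics \cite{Kostya, modular}, and the rest is bookkeeping. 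I would close by remarking that if the primal and dual rates differ, say $\alpha_\x$ and $\alpha_\llambda$, the same argument yields contraction at rate $\min(\alpha_\x,\alpha_\llambda)$, and that uniform positive-definiteness of $\M_\x,\M_\llambda$ then gives exponential convergence to the unique saddle point in the sense of \eqref{d_M}.
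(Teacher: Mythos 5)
Your proposal is correct and follows essentially the same route as the paper: form the $2\times 2$ block Jacobian of \eqref{eq:PD}, apply Theorem~\ref{th_main} blockwise to the diagonal blocks (with the sign flip turning g-concavity in $\llambda$ into g-convexity of $-\script{L}$), and observe that the off-diagonal coupling cancels in $\dot{\mathbf{M}}+\mathbf{A}\T\mathbf{M}+\mathbf{M}\mathbf{A}$ by symmetry of mixed partials, leaving $-2\,\mathrm{diag}(\mathbf{H}_\x,-\mathbf{H}_\llambda)\prec \mathbf{0}$. The paper states this in one line; your write-up simply makes explicit the two points the paper leaves implicit, namely that $\dot{\mathbf{M}}$ stays block diagonal because $\M_\x$ and $\M_\llambda$ have disjoint arguments, and the exact mechanism of the off-diagonal cancellation.
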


\begin{proof} \ Letting $\z = [{\x}\T, {\llambda}{}\T]\T$ and $\dot{\z}  = \f(\z,t)$ denote the overall system dynamics, the system's Jacobian can be written
\renewcommand{\L}{\script{L}}
\begin{equation} \label{eq:PD_Jacobian}
{\bf A}(\x,\llambda, t) =  \jac{\f}{\z} = 
\begin{bmatrix}
- \jac{}{\x} ( \M_\x^{-1} \grad{\L}{\x}) & - \ {\M_\x}^{-1}\ \hess{\L}{\x}{\llambda}\\[1ex]
   {\M_\llambda}^{-1} \ \hess{\L}{\llambda}{\x} & \jac{}{\llambda} (\M_\llambda^{-1} \grad{\L}{\llambda})
    \end{bmatrix} 
\end{equation}
\noindent so that, using Theorem~\ref{th_main}, 
\begin{equation}\nonumber
     {\bf M} {\bf A}  + \ {\bf A}\T {\bf M} \ + \ \dot{\bf M } \ = \ -2 \ \begin{bmatrix}
     {\bf H_x} & {\bf 0}\\
    {\bf 0} & - {\bf H_\llambda} 
    \end{bmatrix} \ < \ {\bf 0}
    \end{equation} \end{proof}

\begin{proposition}
\label{eq:semi_PD}
Consider the primal dual dynamics \eqref{eq:PD} for a scalar cost function $\script{L}({\bf x},{\bf \llambda})$,
with $\script{L}$ g-strongly convex over $\x$ and g-concave (not necessarily strongly so) over $\llambda$ in metrics 
$\M_\x(\x)$ and $\M_{\llambda}(\llambda)$ respectively. Suppose also that one solution of \eqref{eq:PD} is known to be bounded. Then, for any initial condition, the geodesic primal-dual dynamics~\eqref{eq:PD} converge to an
equilibrium $\x^*$, $\llambda^*$. Moreover, $\x^*$ is independent of initial conditions. 
\end{proposition}

\begin{proof}
The proof is given as a corollary to Theorem \ref{thm:semi_nash} in the next section.
\end{proof}

\begin{remark}
The above proposition highlights that contraction of the PD dynamics (e.g., as developed in \cite{Kostya}) is not necessary to guarantee convergence to a unique primal solution. Note however, that the above results only guarantee asymptotic convergence toward the unique primal equilibrium, as opposed to exponential convergence \cite{Kostya} when contraction can be shown for the PD dynamics as a whole. 
\end{remark}

This proposition is reminiscent of results in adaptive control wherein the error dynamics of a certainty-equivalent controller may be asymptotically stable despite the fact that an associated adaptation law may not converge to the actual unknown parameters~\cite{slotine1987adaptive,Lee18}, with adaptation occurring on a "need-to-know" basis in that sense. Conceptually, this principle can apply to more general contexts involving concurrent
control and learning, when effective control is the main goal (e.g., in reinforcement learning).

\begin{remark} Note that this analogy between adaptive control and primal-dual optimization also enables recent results in distributed adaptive control \cite{cloud} to be applied in a distributed primal-dual setting. These results also include straightforward strategies for stably handling communication delays (e.g., using a wave variable formulation \cite{Niemeyer91b,wang2006contraction}) which would find additional motivation in  distributed primal-dual optimization applications.
\end{remark}

\section{Applying Contraction Tools to G-Convex Optimization}
\label{sec:application}

Theorem \ref{th_main} immediately implies that existing combination properties \cite{Slotine98,modular} from contraction
analysis can be directly applied in the context of g-convex optimization.
While these properties derive from simple matrix algebra and in principle could be proven directly from the definition of geodesic convexity, as we will see most
rely for their practical relevance on the flexibility 
afforded by the contraction analysis point of view.

\subsection{Sum of g-convex}

If two functions $f_1(\x,t)$ and $f_2(\x,t)$ are g-convex in the same metric for each $t$, then their sum $f_1(\x,t) + f_2(\x,t)$ is g-convex in the same metric. 

\begin{example}
\label{example:Parallel_Geo_Desc}
Consider a function $f_1(\x_1,\y_1,t)$ g-convex for each $t$ in a block diagonal metric ${\rm BlkDiag}(\M_{x_1}(\x_1),\M_y(\y_1))$ and a function $f_2(\x_2,\y_2,t)$ g-convex for each $t$ in a block diagonal metric ${\rm BlkDiag}(\M_{x_2}(\x_2), \M_y(\y_2))$.
Then, the function:
\[
f(\x_1, \x_2, \y,t) =  f_1(\x_1,\y,t)+f_2(\x_2,\y,t)
\]
is g-convex in metric ${\rm BlkDiag}(\M_{x_1},\M_{x_2},\M_y)$ for each $t$.

\end{example}

\subsection{Skew-Symmetric Feedback Coupling}

Assume that a scalar function $f_1(\x_1,\x_2)$ is $\alpha_1$-strongly g-convex in $\x_1$ in a metric $\M_1(\x_1)$ for each fixed $\x_2$, and similarly that a scalar function $f_2(\x_1,\x_2)$ is $\alpha_2$-strongly g-convex in a metric $\M_2(\x_2)$ for each fixed $\x_1$. If $f_1$ and $f_2$ satisfy the scaled skew-symmetry property
\begin{equation}
\hess{f_1}{\x_1}{\x_2} \ = \ - \ k \ \hess{f_2}{\x_1}{\x_2} 
\label{eq:skew_property}
\end{equation}
where $k$ is some strictly positive constant, then the natural gradient dynamics 
\begin{align}
\label{eq:Nash_dyn}
\dot{\x}_1 &= - \M_1(\x_1)^{-1} \ \grad{f_1(\x_1,\x_2)}{\x_1}  \\
\dot{\x}_2 &= - \M_2(\x_2)^{-1} \ \grad{f_2(\x_1,\x_2)}{\x_2} \nonumber
\end{align}
is contracting with rate $ \ \min(\alpha_1, \alpha_2) $ in metric $\M(\x_1,\x_2) = {\rm BlkDiag}(\M_1(\x_1), k \M_2(\x_2))$. Since the {\em overall} system is both contracting and autonomous, it
tends to a unique equilibrium~\cite{Slotine98} $(\x_1^*, \x_2^*)$ which satisfies the Nash-like conditions 
\begin{align*}
\x_1^* &= {\rm argmin}_{\x_1} f_1(\x_1,\x_2^*) \\
\x_2^* &= {\rm argmin}_{\x_2} f_2(\x_1^*,\x_2) 
\end{align*}

Note that the result can be broadened to cases where the scaled skew-symmetry property is not exactly satisfied, by using the small-gain extension in~\cite{modular}. Taking again the machine learning context as a potential example, such two-player game dynamics can occur in certain types of adversarial training. 

The result extends to a game with an arbitrary number of players. Consider $n$ functions $\{f_i(\x_1,\ldots,\x_n)\}_{i=1}^n$ such that each $f_i$ is $\alpha_i$-strongly g-convex over $\x_i$ in a metric $\M_i(\x_i)$. If the functions satisfy the skew-symmetry conditions
\[
\hess{f_i}{\x_i}{\x_j} = - k_j \hess{f_j}{\x_i}{\x_j}
\]
for each $j>i$, then the suitable generalizations of \eqref{eq:Nash_dyn} result in a coupled system that is contracting with rate ${\rm min}(\alpha_1, \ldots, \alpha_n)$ in the metric 
\[
\M= {\rm BlkDiag}(\M_1, k_2 \M_2, \ldots, k_n \M_n).
\]
The overall system converges to a unique Nash-like equilibrium satisfying
\[
\x_1^* = {\rm argmin}_{\x_1} f_1(\x_1,\x_2^*,\ldots,\x_n^*)
\]
and a similar relation for each other player.

Likewise, the result can be extended to the case when the natural gradient dynamics for each individual player may only be semi-contracting.

\begin{theorem} 
\label{thm:semi_nash}
Consider the two player case \eqref{eq:Nash_dyn}, wherein (a) $f_1$ is $\alpha_1$-strongly g-convex with $\alpha_1>0$ in a uniformly positive definite metric $\M_1(\x_1)$ for each $\x_2$  (b) the Riemannian Hessian $\H_2(\x_1,\x_2)$ of $f_2(\x_1,\x_2)$ in $\x_2$ is only positive {\em semi}-definite for each $\x_1$ in a uniformly positive definite metric $\M_2(\x_2)$ and (c) the skew-symmetry property \eqref{eq:skew_property} holds. Assume that one trajectory of \eqref{eq:Nash_dyn} is known to be bounded. Then, every trajectory of \eqref{eq:Nash_dyn} converges to a Nash equilibrium $\x_1^*$, $\x_2^*$. Moreover, $\x_1^*$  does not depend on initial conditions (i.e., every Nash has the same strategy for player 1). 
\end{theorem}

\begin{proof}
It can be shown that virtual displacements evolve such that
\begin{align*}
\ddt &\left(\delta \x_1\T \M_1(\x_1)\delta \x_1 + k \delta \x_2\T \M_2(\x_2) \delta \x_2 \right)  \\
&\le \ - \ 2 \ \alpha_1 \delta\x_1 \T \M_1(\x_1) \delta \x_1 \ - \ 2 \ k \ \delta \x_2\T \H_2(\x_1,\x_2) \delta \x_2
\end{align*}
which implies, by Barbalat's lemma,
\[
\delta \x_1 \rightarrow 0 \quad {\rm and}\quad \H_2(\x_1,\x_2) \delta \x_2 \ \rightarrow \ 0
\]
Via the same argument as follows from \eqref{ddt_h}, it follows that $\grad{f_1}{\x_1}(\x_1(t), \x_2(t)) \rightarrow 0$ as $t\rightarrow \infty$. So, for each initial condition, $\x_1(t)$ must converge to some equilibrium $\x_1^*$ of the $\x_1$ dynamics. Furthermore, since any $ \delta \x_1 \rightarrow 0  $,
$\x_1^*$ must be {\em unique} and independent of initial conditions. Let us now turn to the behavior of the $\x_2$ dynamics. Given an arbitrary initial condition $(\x_{1,0}$, $\x_{2,0})$ for \eqref{eq:Nash_dyn}, let $L^+$ denote its $\omega$-limit set. Any point in $(\x_1,\x_2) \in L^+$ must satisfy $\x_1 = \x_1^*$. Since the dynamics are autonomous, $L^+$ is composed of trajectories of the system
\begin{align}
\dot{\x}_1 &= \mathbf{0} \\
\dot{\x}_2 &= - \M_2(\x_2)^{-1} \grad{f_2}{\x_2}(\x_1^*,\x_2)
\label{eq:x2_dyn}
\end{align}
Moreover, $L^+$ must be closed and bounded. Since \eqref{eq:x2_dyn} is a natural gradient system of a g-convex function, Thm.~\ref{thm:natGradNonStrict} ensures that any trajectory of \eqref{eq:x2_dyn} must converge to an equilibrium point that is a global minimizer for $f_2(\x_1^*, \x_2)$. Considering any initial condition of \eqref{eq:x2_dyn} that begins in $L^+$, we denote $(\x_1^*, \x_2^*) \in L^+$ as the resulting equilibrium point. However, since \eqref{eq:Nash_dyn} is semi-contracting, any geodesic ball around $(\x_1^*, \x_2^*)$ is forward invariant for \eqref{eq:Nash_dyn}, which implies that $L^+ = \{ (\x_1^*,\x_2^*) \}$. Thus, $\x_2(t)\rightarrow \x_2^*$ as $t\rightarrow \infty$.
Note again that, while $\x_2^*$ depends on initial conditions, $\x_1^*$ does not.
\end{proof}

\begin{corollary}
Consider any two equilibrium points $(\x_1^*, \x_{21}^*)$ and $(\x_1^*, \x_{22}^*)$ for a system that satisfies the condition of Theorem~\ref{thm:semi_nash}. Then, the geodesic between these points is comprised of extremal Nash equilibrium points, all of which have the same cost.

Further, if one equilibrium of \eqref{eq:x2_dyn} is locally asymptotically stable, then it is necessarily globally attractive, and thus all trajectories of \eqref{eq:x2_dyn} converge to this unique equilibrium $(\x_1^*, \x_2^*)$ regardless of initial conditions.
\end{corollary}
\begin{proof}
Proof of the first part follows immediately from applying 
Corollary 3.1 of \cite{RAPCSAK91} to the function $f_2(\x_1^*, \x_2)$. Proof of the second part follows immediately from the application of Corollary \ref{think_locally} herein.
\end{proof}

\begin{corollary}
Proposition \ref{eq:semi_PD} is true.
\end{corollary}
\begin{proof}
Consider Thm.~\ref{thm:semi_nash} with $f_1 = \script{L}(\x,\llambda)$ and $f_2 = - \script{L}(\x,\llambda)$.
\end{proof}

\subsection{Hierarchical Natural Gradient}
Consider a function $f_1(\x_1)$ $\alpha_1$-strongly g-convex in a metric $\M_1(\x_1)$, and a function $f_2(\x_1,\x_2)$ $\alpha_2$-strongly g-convex in a metric $\M_2(\x_2)$ for each given $\x_1$. Then, the hierarchical natural gradient dynamics
\begin{align*}
\dot{\x}_1 &= - \M_1(\x_1)^{-1} \ \grad{f_1(\x_1)}{\x_1}  \\
\dot{\x}_2 &= - \M_2(\x_2)^{-1} \ \grad{f_2(\x_1,\x_2)}{\x_2} 
\end{align*}
is contracting with rate $ \ \min(\alpha_1, \alpha_2) \ $ in metric $\M(\x_1,\x_2) = {\rm BlkDiag}(\M_1(\x_1), \M_2(\x_2))$, under
the mild assumption that the coupling Jacobian
is bounded~\cite{Slotine98}. Since the {\em overall} system is both contracting and autonomous, it
tends to a unique equilibrium~\cite{Slotine98} at rate $\min(\alpha_1, \alpha_2)$, and thus to the unique solution of
\begin{align*}
\grad{f_1(\x_1)}{\x_1} &= {\bf 0}\\
\grad{f_2(\x_1,\x_2)}{\x_2}  &= {\bf 0}
\end{align*}

By recursion, this structure can be chained an arbitrary number of times, or applied to any cascade or directed acyclic graph of natural gradient dynamics. Such hierarchical optimization may play a role, for instance, in backpropagation of natural gradients in machine learning, with all descents occurring concurrently rather than in sequence.

\begin{remark}
In large-scale optimization settings such as those appearing commonly in machine learning, natural gradient with a fully-dense metric can become intractable. In specific cases, such as natural gradient descent based on Fisher information~\cite{Amari98b},
computationally effective approximations have been derived~\cite{Martens15,Amari2018}.
In addition, the combination of simple (e.g., diagonal) metrics through hierarchical structures lends an opportunity to recover significant complexity at broad scale $-$ see, e.g., the hierarchical combination of scalar metrics to learn hierarchical representations of symbolic data in  \cite{Nickel17,Nickel18}. Such simpler metrics are also 
well motivated in the context of positive or monotone
systems \cite{Rantzer15,Manchester17}. In special cases of a dense Hessian metric $\M(\x) = \nabla^2 \psi(\x)$ from a potential $\psi(\x)$, note that continuous mirror descent (see also Proposition~\ref{change_variable} and Example~\ref{psi}) provides an alternate method to compute continuous natural gradient. These methods can avoid the need to invert the metric in cases where there is an explicit inverse exists for the change of variables $\z = \nabla \psi(\x)$, or when \eqref{eq:InvertGrandientViaDynamics} can be run at a fast time scale to invert the gradient map through dynamics.
\end{remark}

\section{Conclusions}
\label{sec:Conclusions}

Overall, this paper has demonstrated that nonlinear contraction analysis provides a  general perspective for analyzing and certifying the global convergence properties of gradient-based optimization algorithms. The common case of strong convexity corresponds to the special case of contracting gradient descent in the identity metric, while our analysis admits global convergence results in the significantly broader case of state-dependent metrics. This result has clear links to the case of geodesically-convex optimization wherein natural gradient descent converges to a unique equilibrium if it is contracting in any metric, broadening from the special case of g-convexity corresponding to contraction in the natural metric. Our analysis of semi-contraction of gradient systems, and the resulting smoothly connected sets of global optima may shed additional light on applications in learning with over parameterized networks \cite{cooper2018loss} where the set of optimizers is recognized to take the form of a low-dimensional manifold. Results on natural primal dual and the convergence to Nash equilibria showcase the broad reach of these fundamental results, where they may serve as the basis for the generation of larger scale distributed optimization algorithms in future work. 
A framework we call Contraction +  shows how contraction or semi-contraction properties can be combined with specific but coarse information on a system, such as the local stability of a particular equilibrium or the weak decreasing of a cost or a Lyapunov-like function, to conclude on global convergence or minimization.

A natural next step for the application of contraction in optimization is to design geodesic quorum sensing ~\cite{tabareau,russo} algorithms for synchronization \cite{bouvrie}, as well as other consensus mechanisms considering time-delays~\cite{Wei_delay,cloud}, which may serve as the basis for distributed and large-scale optimization techniques on Riemannian manifolds. Other future applications will consider stochastic gradient descent in the Riemannian setting~\cite{Bonnabel13} with quorum sensing extensions (as, e.g., in~\cite{Lecun,Boffi}). Such advances could have direct applications, e.g., in the context of machine learning, among others.


\noindent {\bf Acknowledgements} \ \  We thank Nicholas Boffi for stimulating discussions. This research was supported in part by grant 1809314 from the National Science Foundation.

\ifarxiv
\else
\newpage
\fi

\section*{Supporting Information}
\paragraph*{Proof of Theorem 1}

\setcounter{proposition}{0}
\renewcommand*{\theproposition}{A.\arabic{proposition}}
\newcommand*{\thepropositiondis}{A.\arabic{proposition}}

\setcounter{remark}{0}
\renewcommand*{\theremark}{A.\arabic{remark}}
\newcommand*{\theremarkdis}{A.\arabic{remark}}

\newcommand{\nablaM}{\overset{\scriptscriptstyle \M}{\nabla}{}}

We precede the proof of Theorem 1 with a more general result that links contraction with the notion of taking covariant derivatives \cite{do1992riemannian, lovelock1989tensors}.

If we view $\h(\x,t) :\R^n \times \R \rightarrow \R^n$ as providing the components of a vector field, then its covariant derivative (w.r.t.~the Riemannian connection $\nablaM{}$ \cite{do1992riemannian}) along the $i$-th coordinate vector field has components \cite{lovelock1989tensors}:
\begin{equation}
\Big[ \nablaM{}_{\partial_i} \h \Big]_j = \pd{}{i}{h_j} + \Gamma_{ki}^j  h_k
\label{eq:covariant}
\end{equation}
where $\Gamma_{ij}^k$ denotes the Christoffel symbol of the second kind
\begin{equation}
\label{eq:christoffel}
\Gamma_{ij}^m = \frac{1}{2} M^{mk} \left( \pd{M_{ik}}{\xsubj} + \pd{M_{jk}}{\xsubi} - \pd{M_{ij}}{\xsubk} \right)
\end{equation}
and the usual Einstein summation convention is applied  (implying, e.g., a sum over $k$ in the above equations).

By contrast, if we view a function $\g(\x,t) :\R^n \times \R \rightarrow \R^n$ as giving the components of a co-vector field (i.e., a covariant vector field) we have \cite{lovelock1989tensors}:
\[
\Big[ \nablaM{}^*_{\partial_i} \g \Big]_j = \pd{}{i}{g_j} - \Gamma_{ij}^k  g_k
\]
where the $^*$ on $\nablaM{}^*$ denotes that we are considering $\g$ as giving the components of a co-vector field. In either case, we can collect these derivatives along each coordinate vector field into Jacobian-like matrices denoted:
\[
\nablaM\, \h = \begin{bmatrix}\nablaM_{\partial_1}\h & \cdots & \nablaM_{\partial_n} \h \end{bmatrix} ~~~~~~~~~
\nablaM^* \, \g = \begin{bmatrix}\nablaM^*_{\partial_1}\g & \cdots & \nablaM^*_{\partial_n} \g \end{bmatrix}
\]
We are now prepared to state the main Proposition involved in proving Theorem 1.

\begin{proposition}
\label{prop:appendix}
Consider a dynamical system $\dot{\x} = \h(\x,t)$ which we re-write in the form:
\[
\M(\x) \dot{\x} = \g(\x,t) 
\]
where $\h= \M^{-1} \g$. Then, we have the equivalence:
\begin{align}
\M \left(\Jac{\h}{\x} \right) + \left( \Jac{\h}{\x} \right)\T \M + \dot{\M} &= \left[ \nablaM^* \g \right]  + \left[\nablaM^* \g \right]\T  
\label{eq:covar_result}
\end{align}
\end{proposition}

\begin{proof}

To assist the proof, we define
\[
\mathbf{Q} = \M \left(\Jac{\h}{\x} \right) + \left( \Jac{\h}{\x} \right)\T \M + \dot{\M}
\]

First consider the partials of $\h = \M^{-1} \g$,
\begin{align*}
\pd{h_k}{\xsub{j}} &= \pd{}{\xsub{j}}\left[  M^{k\ell}\, g_\ell \right] \nonumber \\
			  &= M^{k\ell} ( \pd{g_\ell}{j})  - M^{kr} (\pd{M_{rs}}{\xsub{j} }) M^{s\ell} g_\ell
\end{align*}

Using this result
\begin{align*}
Q_{ij} =& M_{ik} (\pd{h_k}{\xsub{j}}) + M_{jk} (\pd{h_k}{\xsub{i}}) + (\pd{M_{ij}}{\xsub{k}}) M^{k\ell} g_\ell \\
			   =& M_{ik} M^{k\ell} (\pd{g_\ell}{j})  - M_{ik} M^{kr} (\pd{M_{rs}}{\xsub{j}}) M^{s\ell} g_\ell \\
			    &+M_{jk} M^{k\ell} (\pd{g_\ell}{i})  - M_{jk} M^{kr} (\pd{M_{rs}}{\xsub{i}}) M^{s\ell} g_\ell \\
			    &+(\pd{M_{ij}}{\xsub{k}}) M^{k\ell} g_\ell 
\end{align*}
Noting that $M_{ik} M^{kj} = \delta_{ij}$, with $\delta_{ij}$ the Kronecker delta,
\begin{align*}
Q_{ij} &= \pd{g_i}{j} + \pd{g_j}{i} - M^{\ell k} \left[ \pd{M_{ik}}{\xsub{j}} + \pd{M_{jk}}{\xsub{i}} -\pd{M_{ij}}{\xsub{k}} \right] g_\ell \\
			   &= \pd{g_i}{j} + \pd{g_j}{i} - 2 \Gamma^\ell_{ij} g_\ell \\
			   &= \left[\nablaM^* \g\right]_{ij} + \left[\nablaM^* \g\right ]_{ji}			    \nonumber
\end{align*}
Where the last line follows since $\Gamma_{ij}^\ell = \Gamma_{ji}^\ell$ for the symbols of the Riemmanian connection $\nablaM$.
\end{proof}

\begin{remark}
Following the setup for the previous proposition, the covariant derivatives can also be shown to satisfy:
\begin{equation}
\M \left[ \nablaM \h \right]  = \nablaM^* \g
\label{eq:co_vs_contra}
\end{equation}
which immediately gives:
\begin{equation}
 \M \left(\Jac{\h}{\x} \right) + \left( \Jac{\h}{\x} \right)\T \M + \dot{\M} = \M \left[ \nablaM \h \right] +  \left[ \nablaM \h \right]^T \M \label{eq:covariant_contraction}
\end{equation}
\end{remark}

\newcommand{\Ttheta}{\boldsymbol{\Theta}}
\newcommand{\Tthetadot}{\dot{\Ttheta}}

\begin{remark}
We consider the metric factored as $\M = \Ttheta\T \Ttheta$ with the generalized Jacobian defined according to \cite{Slotine98}
\[
\mathbf{F} = \Ttheta \frac{\partial \h}{\partial \x} \Ttheta^{-1} + \Tthetadot \Ttheta^{-1}
\]
where we associate $\delta \z = \Theta \delta \x$ as a differential change of coordinates. The generalized Jacobian satisfies that, along the flow of the system, local perturbations evolve according to $\frac{\rm d}{{\rm d} t}\delta \z = \mathbf{F} \delta \z$ \cite{Slotine98}.

Considering a similarity transform on $\mathbf{F}$
\[
\Ttheta^{-1} \, \mathbf{F} \, \Ttheta = \frac{\partial \h}{\partial \x} + \Ttheta^{-1} \Tthetadot
\]
we see that the right-hand side takes a similar structural form to the covariant derivative \eqref{eq:covariant}, as noted originally in \cite{Slotine98}. Indeed, defining some new connection coefficients:
\[
\tilde{\Gamma}^{i}_{jk} = \left[ \Ttheta^{-1} \frac{\partial \Ttheta}{\partial x_j}  \right]_{ik}
\]
determines an affine connection with covariant derivatives uniquely specified according to 
\[
\tilde{\nabla}_{\partial_i} \mathbf{e}_j = \tilde{\Gamma}^{k}_{ij} \mathbf{e}_k
\]
where $\mathbf{e}_j$ gives the $j$-th unit vector. (Note this equation uses the symbols to sum over vectors, as opposed to components of vectors as previously.) While this new connection can be shown to be metric compatible (as defined in \cite{do1992riemannian}), it is not in general equal to the one provided by the Riemannian connection $\nablaM$ with the corresponding symbols~$\Gamma^{i}_{jk}$ via \eqref{eq:christoffel}. Any differences do not affect the final contraction analysis in the sense that relationship \eqref{eq:covariant_contraction} still holds when the Riemannian connection $\nablaM$ is replaced with any metric-compatible connection. However, \eqref{eq:co_vs_contra} and therefore \eqref{eq:covar_result} does not hold in general. 
\end{remark}

We are now ready to prove Theorem 1 from the main text.
\begin{proof}[Proof of Theorem 1]
Recall that $\alpha$-strong geodesic convexity of $f(\x,t)$ in the metric $ \M(\x)$ (for each $t$) is equivalent to the Riemannian Hessian of $f$, denoted $\H(\x,t)$, satisfying:
\[
\H(\x,t) \succeq \alpha \M(\x) \quad \forall \x
\]

In coordinates, entries of $\H$ are given by
\[
H_{ij} = \pdd{f}{\xsubi}{\xsubj} - \Gamma_{ij}^k \left( \pd{f}{\xsub{k}} \right) 
\]
which we see is directly related to taking the covariant derivative of the co-vector field with components $\partial_k f$. More specifically, defining $\g(\x,t) = -\nabla f(\x,t)$ we have:
\[
\H = -\nablaM^* \g
\]

Via Proposition~\ref{prop:appendix}, we thus have $\mathbf{Q} = \M \left(\Jac{\h}{\x} \right) + \left( \Jac{\h}{\x} \right)\T \M + \dot{\M} = -2 \mathbf{H}$ such that contraction of the natural gradient dynamics \eqref{eq:NatGrad} with rate $\alpha$ under the inequality
\[
\Q \preceq - 2 \alpha \M
\]
is equivalent to requiring $\H \succeq \alpha \M$.


\end{proof}

\nolinenumbers

\ifarxiv
	\bibliography{Geodesic_plos_final.bbl}
\else
	\bibliographystyle{plos2015}
	\bibliography{Geodesic}
\fi

\end{document}